\renewcommand{\epsilon}{\varepsilon}
\renewcommand{\phi}{\varphi}
\DeclareMathOperator{\rank}{rank}
\newcommand{\numberset}{\mathbb}
\newcommand{\N}{\numberset{N}}
\newcommand{\R}{\numberset{R}}
\newcommand{\M}{\mathcal{M}}
\newcommand{\supp}{\mathrm{Supp}}
\newcommand{\Tr}{\mathrm{Tr}}
\newcommand{\Op}{\mathrm{Op}_h}
\newcommand{\Opk}{\mathrm{Op}_h^\kappa}
\newcommand{\D}{\mathcal{D}}
\newtheorem{theorem}{Theorem}
\newtheorem{lemma}{Lemma}
\newtheorem{proposition}{Proposition}
\newtheorem{definition}{Definition}
\newtheorem{remark}{Remark}
\title[Wave, Klein-Gordon and Dirac equations]{Strichartz estimates for the half wave/Klein-Gordon and Dirac Equations on compact manifolds without boundary
}
\author{Federico Cacciafesta}
\address{Federico Cacciafesta: 
Dipartimento di Matematica, Universit\'a degli studi di Padova, Via Trieste, 63, 35131 Padova PD, Italy}
\email{cacciafe@math.unipd.it}
\author{Elena Danesi}
\address{Elena Danesi: 
Dipartimento di Matematica, Universit\'a degli studi di Padova, Via Trieste, 63, 35131 Padova PD, Italy}
\email{edanesi@math.unipd.it}
\author{Long Meng}
\address{Long Meng:
CERMICS, \'Ecole des ponts ParisTech, 6 and 8 av. Pascal, 77455 Marne-la-Vall\'ee, France}
\email{long.meng@enpc.fr}
\begin{document}

\subjclass{35Q41, 35L05}
\keywords{Klein-Gordon equation. Dirac equation. Strichartz estimates. WKB approximation.}

\maketitle

\begin{abstract}
In this paper we study Strichartz estimates for the half wave, the half Klein-Gordon and the Dirac Equations on compact manifolds without boundary, proving in particular for each of these flows local in time estimates both for the wave and Schr\"odinger admissible couples (in this latter case with an additional loss of regularity). The strategy for the proof is based on a refined version of the WKB approximation.

\end{abstract}
\section{Introduction}

\subsection{Main results}

 The study of dispersive equations in a non-flat setting is a topic that has attracted significant interest in the last years, and many powerful tools and techniques have been developed. In the case of compact manifolds without boundary, we should mention the seminal works \cite{kapitanski1989some} for the wave and \cite{burq2004strichartz} for the Schr\"odinger equation respectively. In the former, the author shows that due to the finite speed of propagation, Strichartz estimates are the same as the estimates on flat Euclidean manifolds, while in the latter the authors prove Strichartz estimates with some additional loss of derivatives for the Schr\"odinger equation. In both cases, the estimates are only local in time, as indeed the compactness of the manifold prevents from having global dispersion. More recently, in \cite{dinh2016strichartz} the author extended these results to deal with the fractional Schr\"odinger propagator $e^{it(-\Delta_g)^{\sigma/2}}$ for $\sigma \in[0,+\infty)\backslash\{1\}$. All of these results are essentially based on the so-called {\em WKB approximation}, that will be the key tool in our strategy as well.
\medskip

The aim of the present paper is two-folded: as a first result, we investigate the dispersive properties of the ``half'' wave/Klein-Gordon equation on a compact Riemannian manifold without boundary $(\M,g)$ of dimension $d\geq2$, that is for system
\begin{align}\label{eq:half-KG}
  \begin{cases}
  i\partial_t u(t,x)+ P_m^{1/2} u(t,x)=0
  \qquad u(t,x):{\mathbb{R}_t}\times \M\rightarrow \mathbb{C},
  \\ 
  u(0,x)=u_0(x)
  \end{cases}
\end{align}
where $P_m=-\Delta_g+m^2$, $m\geq0$ and $\Delta_g$ denotes the Laplace-Beltrami operator on $(\M,g)$. Notice that the solution $u=e^{itP_m^{1/2}}u_0$ to system \eqref{eq:half-KG} is classically connected to the standard wave/Klein-Gordon equations: the function
\begin{equation*}
u(t,x)=\cos(tP_m^{1/2})u_0(x)+\frac{\sin(tP_m^{1/2})}{P_m^{1/2}}u_1(x)=\mathcal{R}(e^{itP_m^{1/2}})u_0(x)+\frac{\mathcal{I}(e^{itP_m^{1/2}})}{P_m^{1/2}}u_1(x)
\end{equation*}
indeed solves the system
\begin{align}\label{eq:KG}
  \begin{cases}
  \partial_t^2 u(t,x)+ P_m u(t,x)=0,\\
  u(0,x)=u_0(x),\\
  \partial_t u(0,x)=u_1(x).
  \end{cases}
\end{align}
In particular, we shall prove that solutions to \eqref{eq:half-KG} satisfy local in time Strichartz estimates both for wave and Schr\"odinger admissible pairs: these estimates, whose proof as we shall see requires a refined version of the WKB approximation, improve on the existing results provided by \cite{kapitanski1989some}.
As a second result, we will prove Strichartz estimates for the Dirac equation on compact manifolds, that is for system
\begin{align}\label{eq:Dirac}
    \left\{
    \begin{aligned}
    &i\partial_t u -\D_m u=0,\quad u:{\mathbb{R}_t}\times \M\rightarrow \mathbb{C}^{N},\\
    &u(0,x)=u_0(x)
    \end{aligned}
    \right.
\end{align}
where again  $(\M,g)$ is a compact Riemannian manifold without boundary of dimension $d\geq2$ equipped with a spin structure, $\mathcal{D}_m$ represents the Dirac operator and the dimension of the target space $N=N(d)=2^{\lfloor\frac{d}{2}\rfloor}$ depends on the parity of $d$ (see subsection \ref{introdir}). The estimates, in this case, can be somehow deduced, as we shall see, from the ones for \eqref{eq:half-KG}, after ``squaring'' system \eqref{eq:Dirac}. We should mention that the construction of the Dirac operator on curved spaces is a delicate but fairly classical task (see, e.g., \cite{parktoms}-\cite{Weakdis}); we include a short overview of this topic in Section \ref{eq:diracsec}.
\medskip

Before stating our main Theorems, let us recall the definitions of {\em admissible pairs}:

\begin{definition}[Wave admissible pair]
We say a pair $(p,q)$ is wave admissible if
\begin{align*}
    p\in [2,\infty],\quad q\in [2,\infty),\quad (p,q,d)\neq (2,\infty,3),\quad \frac{2}{p}+\frac{d-1}{q}\leq \frac{d-1}{2}.
\end{align*}
\end{definition}

\begin{definition}[Schr\"odinger admissible pair]
We say a pair $(p,q)$ is Schr\"odinger admissible if
\begin{align*}
    p\in [2,\infty],\quad q\in [2,\infty),\quad (p,q,d)\neq (2,\infty,2),\quad \frac{2}{p}+\frac{d}{q}\leq \frac{d}{2}.
\end{align*}
\end{definition}
We also denote
\begin{align*}
    \gamma_{p,q}^{\rm KG}:=(1+d)\Big (\frac{1}{2}-\frac{1}{q} \Big )-\frac{1}{p},\quad \gamma_{p,q}^{\rm W}:=d \Big(\frac{1}{2}-\frac{1}{q}\Big)-\frac{1}{p}.
\end{align*}

\medskip

In what follows, we shall use standard notation for the Sobolev spaces, that is
$$\|u\|_{H^s(\M)}:=\|(1-\Delta_g)^{s/2}u\|_{L^2(\M)}.$$
Also, we shall use the classical Strichartz spaces
$X(I,Y({\M}))$ where the $X$ norm is taken in the time variable and the $Y$ norm in the space variable

\medskip

We are now in a position to state our main results.

\begin{theorem}[Strichartz estimates for wave and Klein-Gordon]\label{th:Stri-half}
Let ${\M}$ be a Riemannian compact manifold without boundary of dimension $d\geq 2$. Let $I\subset\R$ be a bounded interval. Then, for any $m\geq 0$ the following estimates hold:
\begin{enumerate}
    \item  for any wave admissible pair $(p,q)$, we have
    \begin{align}\label{eq:stri-W}
        \|e^{itP_m^{1/2}}u_0\|_{L^p(I,L^q({\M}))}\leq C(I)\|u_0\|_{H^{\gamma_{p,q}^{\rm W}}({\M})};
    \end{align}
    \item for any Schr\"odinger admissible pair $(p,q)$, we have
    \begin{align}\label{eq:stri-KG}
        \|e^{itP_m^{1/2}}u_0\|_{L^p(I,L^q({\M}))}\leq C(I)\|u_0\|_{H^{\gamma_{p,q}^{\rm KG}+\frac{1}{2p}}({\M})}.
    \end{align}
\end{enumerate}

\end{theorem}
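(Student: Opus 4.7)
The plan is to follow the three-step scheme of Kapitanski~\cite{kapitanski1989some} and Burq--G\'erard--Tzvetkov~\cite{burq2004strichartz} (see also Dinh~\cite{dinh2016strichartz}): dyadic frequency localization, semiclassical WKB parametrix, and Keel--Tao $TT^*$ combined with summation over subintervals. What is new, as hinted in the abstract, is that \emph{two} different dispersive estimates must be distilled from a single refined WKB parametrix for the half-Klein--Gordon flow, one for each of the two estimates \eqref{eq:stri-W}--\eqref{eq:stri-KG}.

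First, I would perform a Littlewood--Paley decomposition $u_0 = \Pi_0 u_0 + \sum_{h = 2^{-k},\,k\geq 1} \Pi_h u_0$ attached to $\sqrt{-\Delta_g}$, where $\Pi_h$ localizes at frequency $|\xi|_g \sim h^{-1}$. Using the square-function characterization of Sobolev spaces on a compact manifold and Minkowski's inequality in time and space, the theorem reduces to proving a uniform-in-$h$ bound $\|e^{itP_m^{1/2}}\Pi_h u_0\|_{L^p(I,L^q(\M))} \leq C\, h^{-s(p,q)} \|u_0\|_{L^2}$, with $s(p,q) = \gamma_{p,q}^{\rm W}$ in the wave-admissible case and $s(p,q) = \gamma_{p,q}^{\rm KG} + \tfrac{1}{2p}$ in the Schr\"odinger-admissible one; the low-frequency piece $\Pi_0 u_0$ is harmless by Sobolev embedding.

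Next, on local coordinate patches I would construct a semiclassical Fourier integral operator $U_h(t)$ approximating $e^{itP_m^{1/2}}\Pi_h$, with phase $\phi_h(t,x,\xi)$ solving the Hamilton--Jacobi equation $\partial_t \phi = \sqrt{|\nabla_x \phi|_g^2 + h^2 m^2}$ together with $\phi_h(0,x,\xi) = x\cdot\xi$, and amplitudes built recursively by transport along the Hamiltonian flow of the principal symbol $|\xi|_g$. Hamilton--Jacobi theory makes $\phi_h$ available on a time interval $|t|\leq T_0$ depending only on the injectivity radius of $(\M,g)$, and the remainder $(iD_t - P_m^{1/2})U_h(t)$ can be pushed to any prescribed order in $h$. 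Stationary phase then furnishes two complementary decay rates for the Schwartz kernel of $U_h(t)$: a \emph{wave}-type bound $\|U_h(t)\|_{L^1\to L^\infty} \leq C h^{-(d-1)/2}|t|^{-(d-1)/2}$ valid for $|t|\in [h, T_0]$, coming from the rank-$(d{-}1)$ Hessian of $|\xi|_g$ on its level sets; and a \emph{Klein--Gordon}-type bound $\|U_h(t)\|_{L^1\to L^\infty} \leq C h^{-(d+1)/2}|t|^{-d/2}$ valid on intervals of length $\sim h^{1/2}$, obtained by an additional stationary-phase step in the radial direction, where the subprincipal correction $m^2/(2|\xi|_g)$ in the expansion of $\sqrt{|\xi|_g^2 + m^2}$ contributes a nondegenerate Hessian of size $\sim h^3$, exactly where the wave symbol is degenerate.

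Finally, combining each dispersive bound with the trivial $L^2\to L^2$ energy estimate and invoking Keel--Tao will produce Strichartz estimates on intervals of length $T_0$ (wave admissible, with loss $\gamma_{p,q}^{\rm W}$) and of length $h^{1/2}$ (Schr\"odinger admissible, with loss $\gamma_{p,q}^{\rm KG}$); the $\ell^p$ triangle inequality then sums over the $O(|I|/T_0)$, respectively $O(|I|\,h^{-1/2})$, subintervals, producing no extra $h$-power in the wave case and the additional factor $h^{-1/(2p)}$ in the Schr\"odinger case, which is precisely the gap in \eqref{eq:stri-KG}. The main obstacle will be the parametrix construction underlying the Klein--Gordon dispersive estimate: since the relevant subprincipal Hessian is only of size $\sim h^3$, the WKB construction must be carried out to high enough order that this small contribution is not drowned by the remainder, and the precise time window on which the Klein--Gordon decay improves upon the wave one must be identified with care -- this is what is meant by the ``refined'' WKB approximation.
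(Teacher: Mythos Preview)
Your overall scheme---Littlewood--Paley reduction, an $h$-dependent WKB parametrix with phase solving $\partial_t\phi=\sqrt{|\nabla_x\phi|_g^2+h^2m^2}$, two dispersive bounds with different time windows, Keel--Tao plus $\ell^p$-summation over subintervals---is exactly the paper's route, and your identification of the extra $h^{-1/(2p)}$ as coming from the $O(h^{-1/2})$ many subintervals is correct.

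There is, however, one genuine gap: your mechanism for the Klein--Gordon dispersive bound relies on the ``subprincipal correction $m^2/(2|\xi|_g)$'' to supply the missing radial curvature. When $m=0$ this correction vanishes identically, and your phase degenerates to $|\nabla_x\phi|_g$, so there is no curvature left to exploit in the radial direction; as written, your argument then produces only the wave bound and does \emph{not} prove \eqref{eq:stri-KG} for the half-wave flow $e^{it\sqrt{-\Delta_g}}$, which the theorem explicitly covers. The paper's device is to replace $m$ by an artificial $\tilde m$ (equal to $m$ if $m>0$ and to $1$ if $m=0$, cf.~\eqref{eq:m}) in the principal symbol only, building the parametrix with the $h$-dependent symbol $p_{\tilde m,h}(x,\xi)=|\xi|_g^2+h^2\tilde m^2$. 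This supplies radial curvature of size $\sim h^2$ uniformly in $m\ge 0$; the point is that $h^2\tilde m^2$ is a lower-order perturbation, so the functional-calculus parametrix (Lemma~\ref{lem:approx}) can be built with any $m'\ge 0$ in the symbol while still approximating $f(h^2P_m)$.

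Two smaller corrections. First, after the semiclassical rescaling to $|\eta|\sim 1$, the small Hessian eigenvalue is of size $\sim h^2$ (not $h^3$); the paper treats it via stationary phase in the $d-1$ transverse directions followed by Van der Corput (Lemma~\ref{lem:vandercorput}) in the remaining one. Second, the $h^{1/2}$ time restriction does \emph{not} arise from carrying the WKB expansion to high order---the amplitude remainder is $O(h^N)$ on a fixed interval $[-t_0,t_0]$ regardless of the Hessian size. The restriction comes instead from the geometry: on a curved manifold the phase reads $S_h(t,x,\xi)=x\cdot\xi+t\sqrt{|\xi|_g^2+h^2\tilde m^2}+O(t^2)$, and in order to isolate the clean Klein--Gordon phase one must absorb the $O(t^2)$ error into the amplitude as a factor $e^{ih^{-1}O(t^2)}$; this stays a bounded symbol with bounded derivatives precisely when $|t|\lesssim h^{1/2}$ (see Remark~\ref{rem:6}).
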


\begin{remark}\label{rem:Kapitanskii}
Let us compare this result with the one in \cite{kapitanski1989some}. In fact, it is possible to deduce Strichartz from Theorem 2 of \cite{kapitanski1989some} one can deduce Strichartz estimates for a solution $u$ to the half wave/Klein-Gordon equation \eqref{eq:half-KG} with $m\ge0$, $d \ge2$. We observe that the principal symbol of $h P_m^{1/2}$ is $q_0(x,\xi) = \sqrt{g^{i,j}(x) \xi_i \xi_j}$ and $\rank \partial^2_\xi q_0(x,\xi) = d-1$; then, Theorem 2 states that
\[
\big \lVert u \rVert_{L^{p}(I; B^r_{q, q_1} (\M))} \le C(I) \lVert u_0 \rVert_{H^s}
\]
for some $s,r,p,q$ and $q_1$ (here $B^s_{p,q}$ denote the standard Besov spaces). In particular, from the embedding $B^0_{q,2} (\M) \hookrightarrow L^q(\M)$ that holds for every $q \in [2, + \infty]$, we get
\[
\lVert e^{itP_m^{1/2}} u_0 \rVert_{L^{p}(I; L^q(\M)) } \le C(I) \lVert u_0 \rVert_{H^{\gamma^{\rm W}_{p, q}}(\M)}
\]
provided that $p \in [2, + \infty], q \in (2, +\infty]$ and
\[
\begin{cases}
    p > 2 &\quad \text{if} \quad  (d-1) \big ( \frac 12 - \frac1q \big) \ge 1,\\
    \frac2p +\frac{d-1}q \le \frac{d-1}2  &\quad \text{otherwise}.
\end{cases}
\]
This recovers estimate \eqref{eq:stri-W}. 

On the other hand, in order to prove estimate \eqref{eq:stri-KG}, we have to consider an ``$h$-dependent principal symbol'' of $h P_m^{1/2}$ which is $q_{\tilde{m},h}(x,\xi) = \sqrt{g^{i,j}(x) \xi_i \xi_j+h^2\tilde{m}^2}$ with $\tilde{m}=m$ if $m>0$ and $\tilde{m}=1$ if $m=0$ (as in definition \eqref{eq:m}). Then, $\rank \partial^2_\xi q_{\tilde{m},h}(x,\xi) = d$ for any $h>0$ rather than $\rank \partial^2_\xi q_{0}(x,\xi) = d-1$ as mentioned above. This will give us the Schr\"odinger admissible pairs as on the flat manifolds. The $\frac{1}{2p}$ loss of regularity is a consequence of the delicate analysis of the term $q_{\tilde{m},h}$ with respect to $h\in (0,1]$. For the details, see the end of this section and Remark \ref{rem:6}. 
\end{remark}

\begin{theorem}[Strichartz estimates for Dirac]\label{th:Stri-Dirac}
Let ${\M}$ be a Riemannian compact manifold without boundary of dimension $d\geq 2$ equipped with a spin structure.  Let $I\subset\R$ be a bounded interval. Then, for any $m\geq 0$ the following estimates hold:
\begin{enumerate}
    \item  for any wave admissible pair $(p,q)$ , we have
    \begin{align}\label{eq:stri-masslessdirac}
        \|e^{it\D_m}u_0\|_{L^p(I,L^q({\M}))}\leq C(I)\|u_0\|_{H^{\gamma_{p,q}^{\rm W}}({\M})};
    \end{align}
    \item for any Schr\"odinger admissible pair $(p,q)$, we have
    \begin{align}\label{eq:stri-massivedirac}
        \|e^{it\D_m}u_0\|_{L^p(I,L^q({\M}))}\leq C(I)\|u_0\|_{H^{\gamma_{p,q}^{\rm KG}+\frac{1}{2p}}({\M})}.
    \end{align}
\end{enumerate}
\end{theorem}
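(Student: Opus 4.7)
The plan is to reduce the estimates for $e^{it\D_m}$ to those for $e^{itP_m^{1/2}}$ provided by Theorem~\ref{th:Stri-half}, exploiting the fact that the square of the Dirac operator differs from $P_m$ only by a zero-order operator. More precisely, I would invoke the Lichnerowicz formula
\begin{equation*}
\D_m^2 = -\Delta^\Sigma_g + \frac{R}{4} + m^2 = P_m + V,
\end{equation*}
where $\Delta^\Sigma_g$ denotes the spinor Laplacian, $R$ is the scalar curvature of $(\M,g)$, and $V$ is a smooth zero-order term (after identifying the principal scalar part with $P_m$ acting componentwise in a local trivialization of the spinor bundle). In particular, $\D_m^2$ is an elliptic self-adjoint operator whose principal symbol coincides with that of $P_m$.

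Next, I would write $u(t) = e^{it\D_m}u_0$ and observe that it solves the second-order system $\partial_t^2 u + \D_m^2 u = 0$ with $u(0)=u_0$ and $\partial_t u(0) = i\D_m u_0$. Decomposing the initial datum along the positive and negative spectral subspaces of $\D_m$, standard functional calculus yields
\begin{equation*}
e^{it\D_m} u_0 = e^{it|\D_m|}\Pi_+ u_0 + e^{-it|\D_m|}\Pi_- u_0,
\end{equation*}
where $\Pi_\pm = \tfrac{1}{2}\bigl(1 \pm \D_m / |\D_m|\bigr)$ are the spectral projectors on the positive/negative parts of $\D_m$; these are zero-order classical pseudodifferential operators, expected to be bounded on every $L^q(\M)$ for $q \in (1,\infty)$ and on every $H^s(\M)$.

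The problem is thus reduced to proving the Strichartz estimates \eqref{eq:stri-W} and \eqref{eq:stri-KG} for the propagator $e^{it|\D_m|}$. The operator $|\D_m|$ is a first-order self-adjoint pseudodifferential operator whose principal symbol equals $\sqrt{g^{ij}(x)\xi_i\xi_j}$ (or, for the $h$-dependent symbol used in establishing \eqref{eq:stri-KG}, the analogue of $q_{\tilde{m},h}$ appearing in Remark~\ref{rem:Kapitanskii}, up to a zero-order correction). Since the WKB parametrix underlying Theorem~\ref{th:Stri-half} depends only on the principal symbol of $P_m^{1/2}$, with lower-order contributions producing at most an $O(h)$ error in the approximation, the construction should transfer essentially verbatim to $|\D_m|$: one runs it in a local spin frame, componentwise on the spinor bundle, and reassembles the pieces via a finite smooth partition of unity.

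The main obstacle I foresee lies in the functional-calculus step, specifically in verifying that $\D_m / |\D_m|$ is indeed a zero-order pseudodifferential operator bounded on $L^q(\M)$; this amounts to controlling how $|\D_m|$ differs from a square root of $P_m$, which is exactly where the curvature correction $V$ from Lichnerowicz enters. A clean way to handle this is to combine the pseudodifferential calculus on sections of the spinor bundle with a Littlewood--Paley decomposition adapted to $|\D_m|$, so that $\Pi_\pm$ act as Mikhlin-type multipliers on each dyadic shell; once this is in place, Theorem~\ref{th:Stri-half} can be applied componentwise and the estimates of Theorem~\ref{th:Stri-Dirac} follow.
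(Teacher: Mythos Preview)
Your strategy differs substantially from the paper's, and it contains a gap that the authors explicitly flag. The paper does \emph{not} rerun the WKB construction for $|\D_m|$; instead it writes the square $\D_m^2$ via the Lichnerowicz formula as the \emph{scalar} Klein--Gordon operator $P_m$ plus lower-order terms and uses the Duhamel formula
\[
e^{it\D_m}u_0=\dot W_m(t)u_0+iW_m(t)\D_m u_0+\int_0^t W_m(t-s)\bigl(2B^i\partial_i u(s)+\Omega_2 u(s)\bigr)\,ds,
\]
with $W_m(t)=\sin(t\sqrt{P_m})/\sqrt{P_m}$. Theorem~\ref{th:Stri-half} is then applied directly to the scalar propagators $W_m,\dot W_m$, and the integral term is absorbed using $H^{\tilde\gamma-1}\!\to\! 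H^{\tilde\gamma}$ bounds on the first-order perturbation together with the $L^2$-conservation of $|\D_m|^{\tilde\gamma}u(s)$. No spectral projectors and no new parametrix are needed. Note also that your claim ``$V$ is zero-order'' is inaccurate: in any local trivialization the spinor Laplacian differs from the scalar one by the first-order term $B^i\partial_i$ coming from the spin connection, and this is precisely the $\Omega_1$ that the paper has to control in the Duhamel integral.

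The step where your outline becomes a genuine gap is the assertion that the WKB ``should transfer essentially verbatim'' to $|\D_m|$. The principal symbol is indeed scalar, so the phase $S_h$ is unchanged; but the subprincipal symbol of $|\D_m|$ is matrix-valued (it inherits the spin-connection terms), and the transport equations \eqref{eq:arh} for the amplitudes $a_{r,h}$ become coupled matrix ODE systems $\partial_t a_r-\mathcal{A}(t)a_r=\mathcal{F}_r$ rather than scalar transport equations solved by characteristics. The paper's Remark~\ref{rem:2} makes exactly this point: these systems are not solved by a simple flow formula, the matrix $\mathcal{A}$ need not be anti-self-adjoint, and one loses the clean support and boundedness statements used in Step~1 of Lemma~\ref{th:JN}. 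Running the construction ``componentwise'' as you suggest ignores this coupling. Your route can likely be made to work with additional effort (matrix-valued WKB is standard in principle), but it is neither verbatim nor shorter than the Duhamel argument the paper uses; you should also address the possible kernel of $\D_m$ when $m=0$, which makes $\Pi_\pm=\tfrac12(1\pm\D_m/|\D_m|)$ ill-defined and which the paper handles by a second Duhamel step perturbing off the massive case.
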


\begin{remark}
Notice that our argument could be adapted with minor modifications to prove the same Strichartz estimates for equations posed on $\mathbb{R}^d$ with metrics $g$ satisfying the following assumptions:
\begin{enumerate}
    \item There exists $C>0$ such that for all $x,\xi\in\R^d$,
    \begin{align}\label{eq:g1}
        C^{-1}|\xi|^2\leq \sum_{j,k=1}^d g^{jk}(x)\xi_j\xi_k\leq C|\xi|^2;
    \end{align}
    \item For all $\alpha\in \N^d$, there exists $C_\alpha>0$ such that for all $x\in\R^d$,
    \begin{align}\label{eq:g2}
        |\partial^\alpha g^{jk}(x)|\leq C_\alpha,\quad j,k\in\{1,\cdots,d\}. 
    \end{align}
\end{enumerate}
We should stress the fact that assumptions \eqref{eq:g1}-\eqref{eq:g2} are much weaker than the classical ``asymptotically flatness'' assumptions, for which global in time Strichartz estimates have been proved for several dispersive flows, (see in particular \cite{cacciafesta2022strichartz} for the Dirac equation). On the other hand, in our weaker assumptions above we are only able to prove local-in-time Strichartz estimates.
\end{remark}

\begin{remark}
We stress the fact that, to the very best of our knowledge, Theorem \ref{th:Stri-Dirac} is the first result concerning the dispersive dynamics of the Dirac equation on compact manifolds. We should point out the fact that it is not a trivial consequence of Theorem \ref{th:Stri-half}, as it would be in the Euclidean setting: while indeed in the flat case the relation $\mathcal{D}_m^2=-\Delta+m^2$ directly connects the solutions to the Dirac equation to a system of decoupled Klein-Gordon equations, in a non-flat setting, as the definition of the Dirac operator requires to rely on a different connection, the so-called {\em spin connection}, this identity becomes $\mathcal{D}_m^2=-\Delta_S+\frac14\mathcal{R}+m^2$ where $\Delta_S$ is the spinorial (not the scalar) Laplace operator and $\mathcal{R}$ is the scalar curvature of the manifold (this is the so-called the {\em Lichnerowitz formula}). For the details, see Subsection \ref{subsec:dirsquare}.
\end{remark}

As a final result, we will show that the estimates \eqref{eq:stri-masslessdirac} are sharp in the case of the spheres in dimension $d\geq4$: this requires writing explicitly the eigenfunctions of the Dirac operator on the sphere and to prove some asymptotic estimates for them; as we will see, these will be a consequence of some well known asymptotic estimates for Jacobi polynomials.

\subsection{Overview of the strategy} 
The strategy for proving Theorem \ref{th:Stri-half} follows a well-established path based on  WKB approximation: in fact, our proof is strongly inspired by the one of Theorem 1 in \cite{burq2004strichartz} and the one of Theorem 1.2 in \cite{dinh2016strichartz}. As a consequence, we shall omit some of the proofs that can be found in those papers. On the other hand, in order to obtain our Strichartz estimates we will need some ``refined'' version of the WKB approximation: let us briefly try to review the main ideas.
\medskip

Recall that $P_m=-\Delta_g+m^2$ and $P_0=-\Delta_g$. The first ingredient that we need is the following standard Littlewood-Paley decomposition:
\begin{proposition}\label{prop:littlewood}
 Let $\widetilde{\phi}\in C^\infty_0(\R)$ and $\phi\in C^\infty_0(\R\setminus\{0\})$ such that 
\begin{align*}
    \widetilde{\phi}(\lambda)+\sum_{k=1}^\infty\phi(2^{-2k}\lambda)=1,\quad \lambda\in \R.
\end{align*}
Then for all $q\in [2,\infty)$, we have
\begin{align*}
    \|f\|_{L^q(\M)}\leq C_q\left(\|\widetilde{\phi}(P_0)f\|_{L^q(\M)}+\left(\sum_{k=1}^\infty\|\phi(2^{-2k}P_0)f\|_{L^q(\M)}^2\right)^{1/2}\right).
\end{align*}
\end{proposition}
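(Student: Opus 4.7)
The plan is to reduce the claim to a Mihlin-H\"ormander spectral multiplier theorem for $P_0=-\Delta_g$ on the compact manifold $\M$, via a duality argument in the spirit of Stein's classical Littlewood-Paley theory. The key external input is that any symbol $m\in C^\infty_b(\R)$ obeying the usual Mihlin bounds $|\lambda^j m^{(j)}(\lambda)|\le C_j$ gives a bounded operator $m(P_0)$ on $L^r(\M)$ for every $r\in(1,\infty)$, with norm controlled by finitely many of the $C_j$. On compact manifolds this is standard; it can be obtained from finite propagation speed of the wave equation combined with Gaussian heat-kernel estimates (Seeger--Sogge, Alexopoulos, Sogge's book).

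Set $\phi_k(\lambda):=\phi(2^{-2k}\lambda)$ and fix a slightly fattened bump $\widetilde{\psi}\in C^\infty_0(\R\setminus\{0\})$ equal to $1$ on $\supp\phi$, with $\widetilde{\psi}_k(\lambda):=\widetilde{\psi}(2^{-2k}\lambda)$, so that $\phi_k=\phi_k\widetilde{\psi}_k$. By the spectral theorem the decomposition $f=\widetilde{\phi}(P_0)f+\sum_{k\ge 1}\phi_k(P_0)f$ converges in $L^2(\M)$. For $q\in[2,\infty)$ and conjugate exponent $q'\in(1,2]$, duality gives
\[
\|f\|_{L^q(\M)}=\sup_{\|g\|_{L^{q'}(\M)}\le 1}\bigl|\langle f,g\rangle\bigr|,
\]
and for any admissible $g$, self-adjointness together with $\phi_k=\phi_k\widetilde{\psi}_k$ allows to write
\[
\langle f,g\rangle=\langle\widetilde{\phi}(P_0)f,g\rangle+\sum_{k\ge 1}\bigl\langle \phi_k(P_0)f,\widetilde{\psi}_k(P_0)g\bigr\rangle.
\]
H\"older in $x$ followed by Cauchy--Schwarz in $k$ then bounds the sum by
\[
\Bigl(\sum_{k\ge 1}\|\phi_k(P_0)f\|_{L^q(\M)}^2\Bigr)^{\!1/2}\Bigl(\sum_{k\ge 1}\|\widetilde{\psi}_k(P_0)g\|_{L^{q'}(\M)}^2\Bigr)^{\!1/2},
\]
so the whole argument reduces to proving the dual square-function estimate $(\sum_k\|\widetilde{\psi}_k(P_0)g\|_{L^{q'}}^2)^{1/2}\lesssim\|g\|_{L^{q'}}$ for $q'\le 2$.

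This last bound I would obtain by randomization. Let $(\epsilon_k)_k$ be Rademacher variables; for each realization $\omega$, the symbol $\lambda\mapsto\sum_{k\ge 1}\epsilon_k(\omega)\widetilde{\psi}_k(\lambda)$ satisfies the Mihlin hypotheses with constants independent of $\omega$ (the supports of $\widetilde{\psi}_k$ are essentially disjoint at the dyadic scale), so the spectral multiplier theorem gives $\|\sum_k\epsilon_k(\omega)\widetilde{\psi}_k(P_0)g\|_{L^{q'}}\le C\|g\|_{L^{q'}}$ uniformly in $\omega$. Raising to the $q'$-th power, integrating in $\omega$, using Fubini and applying Khintchine pointwise in $x$ yields $\|(\sum_k|\widetilde{\psi}_k(P_0)g|^2)^{1/2}\|_{L^{q'}}\le C\|g\|_{L^{q'}}$. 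Since $q'\le 2$, Minkowski's inequality in its favorable direction gives $(\sum_k\|\widetilde{\psi}_k(P_0)g\|_{L^{q'}}^2)^{1/2}\le \|(\sum_k|\widetilde{\psi}_k(P_0)g|^2)^{1/2}\|_{L^{q'}}$, which closes the estimate. Taking the supremum over $g$ produces the desired inequality, after absorbing the trivial low-frequency term $\|\widetilde{\phi}(P_0)f\|_{L^q}\|g\|_{L^{q'}}$.

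The only nontrivial step is the spectral multiplier theorem on $(\M,g)$; everything else is abstract functional analysis (duality, Cauchy--Schwarz, Khintchine, Minkowski). Since the paper's aim is to apply Proposition~\ref{prop:littlewood} as a black-box dyadic decomposition of $L^q(\M)$, the cleanest write-up is to quote the multiplier theorem from the compact-manifold literature and assemble the four lines above; no semiclassical or WKB input is needed at this stage.
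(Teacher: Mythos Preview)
Your argument is correct; it is the standard Stein randomization proof of dyadic Littlewood--Paley on a manifold, and the only black box you invoke (a Mihlin--H\"ormander spectral multiplier theorem for $-\Delta_g$ on a compact manifold) is indeed available in the literature you mention. The paper itself does not give a proof at all: it simply refers to Corollary~2.3 of Burq--G\'erard--Tzvetkov \cite{burq2004strichartz}, where exactly this inequality is stated and proved. Their proof proceeds along the same lines as yours (duality, Rademacher sequences, Khintchine, and an $L^p$ bound for $\sum_k \epsilon_k \phi(2^{-2k}P_0)$), the multiplier input being supplied by the semiclassical functional calculus developed earlier in that paper rather than by an off-the-shelf multiplier theorem. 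So there is no genuine difference of method, only of packaging: you outsource the $L^p$ boundedness of the randomized multiplier to a general theorem, whereas \cite{burq2004strichartz} derives it from their own parametrix construction.
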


\begin{proof}
See, e.g., Corollary 2.3 in \cite{burq2004strichartz}.
\end{proof}

The second ingredient is the following $TT^*$ criterion:

\begin{proposition}\label{prop:stri}
Let $(X,\mathcal{S},\mu)$ be a $\sigma$-finite measured space, and $U:\;\R\to \mathcal{B}(L^2(X,\mathcal{S},\mu))$ be a weakly measurable map satisfying, for some constants $C,\gamma,\delta>0$,
\begin{align*}
    \|U(t)\|_{L^2(X)\to L^2(X)}&\leq C,\quad t\in\R,\\
    \|U(t)U(s)^*\|_{L^1(X)\to L^\infty(X)}&\leq Ch^{-\delta}(1+|t-s|h^{-1})^{-\tau},\quad t,s\in\R.
\end{align*}
Then for all pair $(p,q)$ satisfying 
\begin{align*}
    p\in [2,\infty],\quad q\in [1,\infty],\quad (p,q,\tau)\neq (2,\infty,1),\quad \frac{1}{p}\leq \tau\left(\frac{1}{2}-\frac{1}{q}\right),
\end{align*}
we have
\begin{align*}
    \|U(t)u\|_{L^p(\R,L^q(X))}\leq Ch^{-\kappa}\|u\|_{L^2(X)}
\end{align*}
where $\kappa=\delta(\frac{1}{2}-\frac{1}{q})-\frac{1}{p}$.
\end{proposition}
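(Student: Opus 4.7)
Set $T_h u(t) := U(t) u$, viewed as a map $L^2(X) \to L^p(\R; L^q(X))$. By the standard $TT^*$ argument, the desired Strichartz bound $\|T_h u\|_{L^p_t L^q_x} \le C h^{-\kappa} \|u\|_{L^2}$ is equivalent to
\[
\| T_h T_h^* F \|_{L^p(\R; L^q(X))} \le C h^{-2\kappa} \| F \|_{L^{p'}(\R; L^{q'}(X))},
\]
where $T_h T_h^* F(t) = \int_\R U(t)U(s)^* F(s)\,ds$. The first step is to interpolate, for each fixed pair $(t,s)$, between the two hypotheses via Riesz--Thorin: the $L^2 \to L^2$ bound $C^2$ together with the dispersive $L^1 \to L^\infty$ bound $Ch^{-\delta}(1+|t-s|h^{-1})^{-\tau}$ yields, for every $q \in [2, \infty]$,
\[
\| U(t)U(s)^* \|_{L^{q'}(X) \to L^q(X)} \le C\, h^{-\delta(1 - 2/q)}\,\bigl(1+|t-s|h^{-1}\bigr)^{-\tau(1-2/q)}.
\]

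Since by definition $\delta(1-2/q) = 2\kappa + 2/p$, Minkowski's inequality in the $x$-variable reduces the $TT^*$ estimate to the scalar one-dimensional convolution inequality
\[
\Bigl\| \int_\R k_h(t-s)\,G(s)\,ds \Bigr\|_{L^p_t(\R)} \le C h^{2/p}\,\|G\|_{L^{p'}_t(\R)}, \qquad k_h(t) := (1+|t|h^{-1})^{-\tau(1-2/q)}.
\]
In the non-endpoint range $\tfrac{1}{p} < \tau\bigl(\tfrac{1}{2}-\tfrac{1}{q}\bigr)$, I would obtain this from Young's inequality with exponent $r = p/2$ (so that $1+\tfrac{1}{p} = \tfrac{1}{r}+\tfrac{1}{p'}$): a rescaling computation gives $\|k_h\|_{L^{p/2}(\R)} \le C h^{2/p}$, the integral converging precisely because $\tau(1-2/q)\cdot p/2 > 1$. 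Multiplying by the prefactor $h^{-(2\kappa + 2/p)}$ yields the claim.

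The endpoint case $\tfrac{1}{p} = \tau\bigl(\tfrac{1}{2}-\tfrac{1}{q}\bigr)$, with the forbidden triple $(p,q,\tau) = (2,\infty,1)$ excluded, is genuinely harder: Young's inequality fails at the borderline because $k_h \notin L^{p/2}(\R)$. Here I would invoke the Keel--Tao bilinear machinery: decompose the time axis into dyadic shells $\{2^j h \le |t-s| < 2^{j+1}h\}$, bilinearly interpolate on each shell between the $L^2 \times L^2$ bound (uniform in $j$) and the $L^1 \times L^1$ bound (which decays like $C\, 2^{-j\tau}$), and sum the resulting geometric series through a real-interpolation / atomic decomposition of the Lorentz space $L^{p',2}_t$ into $L^1_t$ and $L^2_t$ atoms. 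This endpoint step is the only delicate part, and it is exactly there that the exclusion of $(2,\infty,1)$ becomes necessary, since the bilinear sum fails to converge at that precise point; all the other ingredients are routine $TT^*$ duality combined with Riesz--Thorin interpolation. I expect this Keel--Tao argument to be the main obstacle, although it is by now entirely standard.
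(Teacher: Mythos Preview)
Your proposal is correct and follows exactly the standard Keel--Tao $TT^*$ argument. The paper itself does not give a proof but simply refers to \cite{keel1998endpoint} and to a semiclassical version in \cite{zhang2015strichartz}, so your outline is precisely the argument one finds in those references.
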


\begin{proof}
See \cite{keel1998endpoint} or Proposition 4.1 in \cite{zhang2015strichartz} for a semiclassical version.
\end{proof}

Then, the third main ingredient we need is given by the following proposition. Here and in what follows, we shall denote with 
\begin{equation}\label{eq:m}
\tilde{m}=
\begin{cases}
m &\quad {\rm if}\: m>0\\
1 &\quad {\rm if}\; m=0.
\end{cases}
\end{equation}

\begin{proposition}[Dispersive estimates]\label{prop:disper}
Let $m\geq 0$, and $\phi\in C^\infty_0(\R\backslash[-\tilde{m},\tilde{m}])$ with $\tilde{m}$ given by \eqref{eq:m}. Then, for any $t\in [-t_0,t_0]$,
\begin{align}\label{eq:disper-W}
    \|e^{itP_m^{1/2}}\phi(-h^2\Delta_g)u_0\|_{L^\infty(\M)}\leq Ch^{-d}(1+|t|h^{-1})^{-(d-1)/2}\|u_0\|_{L^1(\M)};
\end{align}
for any $t\in h^{\frac{1}{2}}[-t_0,t_0]$,
\begin{align}\label{eq:disper-KG}
    \|e^{itP_m^{1/2}}\phi(-h^2\Delta_g)u_0\|_{L^\infty(\M)}\leq Ch^{-d-1}(1+|t|h^{-1})^{-d/2}\|u_0\|_{L^1(\M)}.
\end{align}
\end{proposition}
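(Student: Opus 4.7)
The plan is to construct a semiclassical WKB parametrix for the propagator $e^{itP_m^{1/2}}\phi(-h^2\Delta_g)$ and then read off the dispersive decay by stationary phase. First I would use a finite partition of unity subordinate to coordinate charts on $\M$ together with Egorov-type arguments, so that it suffices to bound, in each chart, the Schwartz kernel of an operator of the form
\[
U_h(t)u_0(x) = (2\pi h)^{-d}\int\!\!\!\int e^{\frac{i}{h}(\varphi(t,x,\xi)-y\cdot\xi)}\,a(t,x,\xi,h)\,u_0(y)\,dy\,d\xi,
\]
where $\varphi$ solves the Hamilton--Jacobi equation $\partial_t\varphi=p(x,\nabla_x\varphi)$ with $\varphi(0,x,\xi)=x\cdot\xi$, and $a=\sum_{j\ge 0} h^j a_j$ is obtained by solving the usual transport equations along the Hamiltonian flow of $p$. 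The localization by $\phi(-h^2\Delta_g)$ confines $\xi$ to a compact set bounded away from zero (since $\phi$ vanishes on $[-\tilde m,\tilde m]$), so all symbols are elliptic on the relevant region and the transport equations can be solved for $|t|\le t_0$ by standard arguments reviewed in \cite{burq2004strichartz,dinh2016strichartz}.

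The key point is the choice of the principal symbol $p$. For estimate \eqref{eq:disper-W} I would take the homogeneous symbol $p_0(x,\xi)=\sqrt{g^{ij}(x)\xi_i\xi_j}$, in which case the kernel of $U_h(t)$ is an oscillatory integral with phase $\Phi=\varphi(t,x,\xi)-y\cdot\xi$ whose $\xi$-Hessian has rank $d-1$ (degenerate in the radial direction, consistent with the homogeneity). Stationary/non-stationary phase in $\xi$ on the $(d-1)$-dimensional transverse sphere, together with the bound $|\partial_t\Phi|\gtrsim 1$ whenever $|t|\gg h$, yields the factor $(1+|t|h^{-1})^{-(d-1)/2}$, while the $(2\pi h)^{-d}$ prefactor produces the $h^{-d}$. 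For estimate \eqref{eq:disper-KG} I would instead work with the $h$-dependent full symbol $p_{\tilde m,h}(x,\xi)=\sqrt{g^{ij}(x)\xi_i\xi_j+h^2\tilde m^2}$ of $hP_m^{1/2}$, whose Hessian
\[
\partial^2_\xi p_{\tilde m,h}=\frac{g^{ij}}{p_{\tilde m,h}}-\frac{(g\xi)\otimes(g\xi)}{p_{\tilde m,h}^3}
\]
has determinant of order $h^2\tilde m^2$: the missing radial direction is recovered, but with a small eigenvalue of size $h^2$. This gives non-degeneracy of rank $d$, hence a decay factor $(1+|t|h^{-1})^{-d/2}$ in stationary phase, but at the cost of a factor $h^{-1}$ from the small eigenvalue; combined with $(2\pi h)^{-d}$ this produces the $h^{-d-1}$ loss appearing in \eqref{eq:disper-KG}.

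Two technical points deserve attention. First, the parametrix only approximates $U_h(t)$: I would construct $U_h^{(N)}(t)$ with amplitude truncated at order $N$ and estimate the remainder $U_h(t)-U_h^{(N)}(t)$ by energy methods and Sobolev embedding, making it $O(h^M)$ for any $M$ by taking $N$ large; since the main term is already $O(h^{-d}$ or $h^{-d-1})$, the remainder is harmless. Second, and most delicate, is the time window in \eqref{eq:disper-KG}: the Hamilton--Jacobi equation with the $h$-dependent symbol $p_{\tilde m,h}$ can only be solved on a time interval over which the flow stays in a coordinate chart and $\varphi$ remains smooth uniformly in $h$. The small eigenvalue of $\partial^2_\xi p_{\tilde m,h}$ produces $O(h^{-2})$ second-derivative growth in the transport equations for the amplitudes, so the usual bootstrap gives control only up to $|t|\lesssim h^{1/2}$, explaining the restriction $t\in h^{1/2}[-t_0,t_0]$. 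This $h^{1/2}$ bottleneck is the main obstacle, and is also the source of the $\tfrac{1}{2p}$ loss in Theorem \ref{th:Stri-half}, since applying the $TT^*$ criterion of Proposition \ref{prop:stri} on intervals of length $h^{1/2}$ and summing over $\sim h^{-1/2}$ such intervals converts the ``clean'' $\gamma_{p,q}^{\rm KG}$ scaling into $\gamma_{p,q}^{\rm KG}+\tfrac{1}{2p}$.
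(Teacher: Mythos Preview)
Your overall architecture (charts, WKB parametrix, stationary phase, remainder by Sobolev embedding) matches the paper, and your treatment of \eqref{eq:disper-W} is essentially correct. However, your argument for \eqref{eq:disper-KG} has a genuine gap, and your explanation of the $h^{1/2}$ restriction is misplaced.

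First, the Hamilton--Jacobi and transport equations with the $h$-dependent symbol $p_{\tilde m,h}$ are solved uniformly in $h\in(0,1]$ on a fixed interval $[-t_0,t_0]$: since $\psi(p_{\tilde m,h})\in\mathcal S(-\infty)$ with bounds independent of $h$, all derivatives of $S_h$ and of the amplitudes $a_{j,h}$ are bounded uniformly (this is Proposition~\ref{prop:HJ} in the paper). There is no $O(h^{-2})$ blow-up in the transport equations; the parametrix construction is \emph{not} the bottleneck.

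Second, and this is the real issue, you cannot apply stationary phase directly to the full phase $\Phi_h$ in all $d$ variables. Expanding $S_h$ via \eqref{eq:S2} gives
\[
\Phi_h(t,x,y,\eta)=\frac{\sqrt{g(x)}(x-y)\cdot\eta}{t}+\sqrt{|\eta|^2+h^2\tilde m^2}+O(t),
\]
so that $\nabla_\eta^2\Phi_h=\nabla_\eta^2\sqrt{|\eta|^2+h^2\tilde m^2}+O(t)$. The determinant of the main Hessian is $\sim h^2$, but the $O(t)$ perturbation dominates it as soon as $t\gg h^2$, which covers almost all of the range $h\le t\le h^{1/2}t_0$. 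Hence the rank-$d$ non-degeneracy you invoke simply fails, and your ``$h^{-1}$ from the small eigenvalue'' conclusion is unjustified.

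The paper circumvents this by moving the $O(t^2)$ part of the phase into the amplitude,
\[
\widetilde a_h=e^{ih^{-1}t^2\int_0^1(1-\theta)\partial_t^2S_h(\theta t,\cdot)\,d\theta}\,a_h,
\]
so that the remaining phase is exactly $\widetilde\Phi_h=t^{-1}\sqrt{g}(x-y)\cdot\eta+\sqrt{|\eta|^2+h^2\tilde m^2}$ with no $O(t)$ error. The price is that $|\partial_\eta^\alpha\widetilde a_h|\lesssim (h^{-1}t^2)^{|\alpha|}$, which stays bounded precisely when $|t|\lesssim h^{1/2}$; \emph{this} is where the restriction $t\in h^{1/2}[-t_0,t_0]$ originates. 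One then applies stationary phase in $d-1$ ``good'' $\eta$-variables (eigenvalues $\sim 1$), obtaining $\lambda^{-(d-1)/2}$, and handles the remaining one-dimensional integral by Van~der~Corput using the explicit lower bound $|F''(\eta_j)|\gtrsim h^2\tilde m^2$ computed from $\widetilde\Phi_h$, which yields the extra factor $(h^2\lambda)^{-1/2}$. Combining these gives $h^{-d}\lambda^{-(d-1)/2}(h^2\lambda)^{-1/2}=h^{-d-1}\lambda^{-d/2}$, as claimed.
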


Let us quickly show how Theorem \ref{th:Stri-half} can be derived from these three Propositions.

\begin{proof}[Proof of Theorem \ref{th:Stri-half}]
We first consider the Strichartz estimates for wave admissible pair by using \eqref{eq:disper-W}. From Proposition \ref{prop:stri} and \eqref{eq:disper-W}, we infer that
\begin{align*}
    \|e^{itP_m^{1/2}}\phi(-h^2\Delta_g)u_0\|_{L^p([-t_0,t_0],L^q(\M))}\leq Ch^{-\gamma_{p,q}^{\rm W}}\|u_0\|_{L^2(\M)}.
\end{align*}
By writing $I$ as a union of $N$ intervals $I_c=[c-t_0,c+t_0]$ of length $2t_0$ with $N\leq C$, we have
\begin{align*}
    \|e^{itP_m^{1/2}}\phi(-h^2\Delta_g)u_0\|_{L^p(I,L^q(\M))}\leq Ch^{-\gamma_{p,q}^{\rm W}}\|u_0\|_{L^2(\M)}.
\end{align*}
Taking $h = 2^{-k}$, Proposition \ref{prop:littlewood} and the Minkowski inequality give 
\begin{align*}
    \|e^{itP_m^{1/2}}u_0\|_{L^p(I,L^q(\M))}&\leq C\|e^{itP_m^{1/2}}\widetilde{\phi}(P_0)u_0\|_{L^p(I,L^{q}(\M))}+C\left(\sum_{k=1}^\infty\|e^{itP_m^{1/2}}\phi(2^{-2k}P_0)u_0\|_{L^p(I,L^q(\M))}\right)^{1/2}\\
    &\leq C\|u_0\|_{L^2(\M)}+C\left(\sum_{k=1}^\infty 2^{-2k\gamma_{p,q}^{\rm W}} \|\phi(2^{-2k}P_0)u_0\|_{L^2(\M)}\right)^{1/2}\leq C\|u_0\|_{H^{\gamma_{p,q}^{\rm W}}(\M)}
\end{align*}
since $[P_m,P_0]=0$, and where we have used that 
\begin{align*}
    \|e^{itP_m^{1/2}}\widetilde{\phi}(P_0)u_0\|_{L^q(\M)}\lesssim  \|e^{itP_m^{1/2}}\widetilde{\phi}(P_0)u_0\|_{H^s(\M)}=\|(1-\Delta_g)^{s/2}\widetilde{\phi}(P_0)u_0\|_{L^2(\M)}\lesssim C \|\widetilde{\phi}(P_0)u_0\|_{L^2(\M)}
\end{align*}
as $\widetilde{\phi}(\lambda)\in C^\infty_0(\R)$.

We now turn to the proof for Schr\"odinger admissible pairs; here we make use of \eqref{eq:disper-KG}. We write $I$ as a union of $N=N_h$ intervals $I_{c_n}=[c_n-h^{1/2}t_0,c_n+h^{1/2}t_0]$, $c_n \in \R$, of length $2h^{\frac{1}{2}}t_0$ with $N\leq Ch^{-\frac{1}{2}}$. Using Proposition \ref{prop:stri}, we infer that
\begin{align*}
    \|e^{itP_m^{1/2}}\phi(-h^2\Delta_g)u_0\|_{L^p(I,L^q(\M))}&\leq \left(\sum_{n=1}^N\int_{I_{c_n}}\|e^{itP_m^{1/2}}\phi(-h^2P_0)u_0\|_{L^q(\M)}dt\right)^{1/p}\\
    &\leq CN^{1/p}h^{-\gamma_{p,q}^{\rm KG}}\|u_0\|_{L^2(\M)}\leq Ch^{-\gamma_{p,q}^{\rm KG}-\frac{1}{2p}}\|u_0\|_{L^2(\M)}.
\end{align*}
Arguing as for the wave admissible pairs case, we conclude that
\begin{align*}
    \|e^{itP_m^{1/2}}u_0\|_{L^p(I,L^q(\M))}\leq C\|u_0\|_{H^{\gamma_{p,q}^{\rm KG}+\frac{1}{2p}}(\M)}.
\end{align*}
\end{proof}

Therefore, the only thing we need to prove is Proposition \ref{prop:disper}: Section \ref{sec:proofdisp} will be devoted to this. As the proof is quite technical and involved, before entering the details let us try to explain the main ideas and the main improvements with respect to the existing results. 

We are going to prove the dispersive estimates \eqref{eq:disper-W} and \eqref{eq:disper-KG} by making use of the WKB approximation and stationary phase theorem (see \cite{robert1987autour} for generalities). For \eqref{eq:disper-W}, one can obtain the estimate by using the ``standard'' WKB approximation, as done in \cite{burq2004strichartz,dinh2016strichartz} after a slight refinement of the stationary phase method. However, for \eqref{eq:disper-KG}, a more structural modification is needed: roughly speaking, the standard WKB approximation says that any $h$-dependent symbol $A_h$ can be written asymptotically as follows
\begin{align*}
    A_h\sim \sum_{j=0}^{N-1} h^j a_j+\mathcal{O}(h^N),
\end{align*}
where here the terms $(a_j)_j$ are independent of $h$. In order to obtain the dispersive estimate \eqref{eq:disper-KG}, we consider instead an $h$-dependent WKB approximation, that is,
\begin{align*}
    A_h\sim \sum_{j=0}^{N-1} h^j a_{j,h}+\mathcal{O}(h^{N}).
\end{align*}
The difference is that after the asymptotic expansion $a_{j,h}$ will still be $h$-dependent, but their values and all the derivatives will be uniformly bounded w.r.t. $h\in (0,1]$. 

To explain it better, let us consider the following semiclassical half Klein-Gordon equation (i.e., $m>0$) on the flat manifold $(\R^d,\delta_{jk})$:
\begin{align}\label{eq:half-KG-d}
    ih\partial_t \widetilde{u}+h\sqrt{m^2-\Delta} \widetilde{u}=0,\quad \widetilde{u}(0,x)=\phi(-h^2\Delta)u_0(x).
\end{align}
We seek $\widetilde{u}$ as the following oscillatory integral
\begin{align}\label{eq:wide-u}
    \widetilde{u}(s,x)=\int_{\mathbb{R}^d}e^{\frac{i}{h}S_h(s,x,\xi)}a(s,x,\xi,h)\widehat{u}_0\left(\frac{\xi}{h}\right)\frac{d\xi}{(2\pi h)^d}
\end{align}
where
\begin{align*}
    a(s,x,\xi,h)=\sum_{j=0}^Nh^ja_{j,h}(s,x,\xi),\quad a_{0,h}(0,x,\xi)=\phi(\xi),\quad a_{j,h}(0,x,\xi)=0 \textrm{ for } j\geq 1
\end{align*}
and 
\begin{align*}
    S_h(0,x ,\xi) = x \cdot \xi.
\end{align*}

We first consider the standard $h$-independent WKB approximation. Proceeding as in \cite{dinh2016strichartz} using the fact that the principal symbol of $h^2P_m$ is $p_{0,0}(x,\xi)=|\xi|^2$, we know that $S_h$ satisfies $S_h(t,x,\xi)=x\cdot \xi +t|\xi|$ which solves the following Hamilton-Jacobi equation
\begin{align*}
    \partial_t S_h -\sqrt{|\nabla_x S|^2}=0,\quad S_h(0,x,\xi)=x\cdot \xi
\end{align*}
and $(a_{j,h}(t,x,\xi))_j$ independent of $h$ exist for $t$ small enough. Then the problem that $\widetilde{u}$ solves is indeed a wave equation which is essentially equivalent to the following one:
\begin{align*}
    \partial_t^2 \widetilde{u}-\Delta\widetilde{u}=f(\widetilde{u})
\end{align*}
where $f(\widetilde{u}):=-m^2\widetilde{u}$ plays the role of an inhomogeneous term. Obviously, the Strichartz estimates obtained by this $h$-independent WKB approximation are far from optimal for the massive case and can not be global-in-time.

Now we turn to the $h$-dependent WKB approximation that we shall use in Subsection \ref{sec:WKB}. Taking $p_{m,h}=|\xi|^2+h^2m^2$ as the principal symbol, as we will see in Eqn. \eqref{eq:Sh}, the phase $S_h$ now takes the form $S_h=x\cdot \xi +t\sqrt{h^2m^2+|\xi|^2}$ for $m>0$. Then we will get that 
\begin{align*}
    \partial_t a_{0,h}-\nabla_\xi \sqrt{h^2m^2+h^2|\xi|^2}\cdot \nabla_x a_{0,h}=0,
\end{align*}
which yields that $a_{0,h}(t,x,\xi)=\phi(\xi)$ for any $t\in\mathbb{R}$. Analogously, we have $a_{k,h}(t,x,\xi)=0$ for $k=1,\cdots,N$ and $t\in\mathbb{R}$. As a result, we deduce the following oscillatory integral representation for $\widetilde{u}$:
\begin{align*}
    \widetilde{u}(t,x)=\frac{1}{(2\pi h)^d}\int_{\mathbb{R}^d}\int_{\mathbb{R}^d}e^{ih^{-1}[(x-y)\cdot \xi +t\sqrt{h^2m^2 +|\xi|^2}]}\phi(|\xi|^2) u_0(y)d\xi dy.
\end{align*}
This formula holds for any $t\in\mathbb{R}$. Thus the Strichartz estimates that this WKB approximation produces are really the ``standard'' ones for Klein-Gordon equation in the flat Euclidean case.

We can conclude: compared to the standard WKB approximation, this  $h$-dependent version gives the exact integral formula for the half Klein-Gordon equation on $(\R^d,\delta_{jk})$. Then the Strichartz estimates that we deduce directly is exactly the one for the Klein-Gordon equation rather than the one for the wave equation. Furthermore, on the flat Euclidean manifold, we can get the global-in-time Strichartz estimates by using this $h$-dependent WKB approximation (see, e.g., \cite[Chp. 2.5]{nakanishi2011invariant}) while only local-in-time Strichartz estimates will be obtained by using the $h$-independent WKB approximation. 

Notice that we may take ${p}_{\widetilde{m},h}=|\xi|^2+h^2\tilde{m}^2$ for any $\tilde{m}\geq 0$ as a principal symbol instead of $p_{m,h}$; in this case, the corresponding WKB approximation still allows to prove local-in-time Strichartz estimates, but not the global ones. 
\medskip

We conclude the introduction with the following remark, that is technical:

\begin{remark}\label{rem:6} 
    Compared with the Klein-Gordon Strichartz estimates on flat manifold $(\R^d,\delta_{jk})$, we will lose some regularity on the initial datum (see \eqref{eq:stri-massivedirac}) on compact manifolds $(\M,g)$. As we will see later (formula \eqref{eq:Hamitlonian-Jacobi}), on the compact manifold, the phase term $S_h$ satisfies
    \begin{align*}
            \partial_t S_h(t,x,\xi)-\nabla_\xi \sqrt{h^2m^2+h^2g^{jk}(x) \partial_j S_h \partial_k S_h}=0
    \end{align*}
   and takes the form
    \begin{align*}
         S_h(t,x,\xi)=x\cdot \xi+t\sqrt{g^{ij}\xi_i\xi_j+h^2\tilde{m}^2}+\mathcal{O}(t^2).
    \end{align*}
Compared with the phase term on the flat manifold, we have an error term $\mathcal{O}(t^2)$ which will complicate our argument when considering the stationary phase theory, and this will eventually produce the additional loss of regularity.
\end{remark}

The paper is organized as follows: Section \ref{sec:proofdisp} is devoted to the proof of Proposition \ref{prop:disper}, while Section \ref{eq:diracsec} contains the proof of Theorem \ref{th:Stri-Dirac} as well as a discussion on the sharpness of these latter estimates on the sphere.

\medskip

{\bf Acknowledgments.} F.C. and E.D acknowledge support from the University of Padova STARS project ``Linear and Nonlinear Problems for the Dirac Equation'' (LANPDE), L.M acknowledges support from the European Research Council (ERC) under the European Union's Horizon 2020 research and innovation program (grant agreement No. 810367).

\section{Dispersive estimates: proof of Proposition \ref{prop:disper}.}\label{sec:proofdisp}
This section is devoted to the proof of Proposition \ref{prop:disper}. Let us start by recalling some basic results about coordinate charts and semiclassical calculus.

\subsection{Preliminaries: coordinate charts, Laplace-Beltrami operator and semiclassical functional calculus.}

 A coordinate chart $(U_\kappa,V_\kappa,\kappa)$ on $\M$ comprises an homeomorphism $\kappa$ between an
open subset $U_\kappa$ of $\M$ and an open subset $V_\kappa$ of $\R^d$. Given $\chi\in C^\infty_0(U_\kappa)$ (resp. $\zeta\in C^\infty_0(V_\kappa)$), we define the pushforward of $\chi$ (resp. pullback of $\zeta$) by $\kappa_*\chi=\chi\circ \kappa^{-1}$ (resp. $\kappa^*\zeta=\zeta\circ\kappa$). For a
given finite cover of $\M$, namely $M=\cup_{\kappa\in\mathcal{F}}U_\kappa$ with $\#\mathcal{F}<\infty$, there exist $\chi_\kappa\in C^\infty_0(U_\kappa),\;\kappa\in\mathcal{F}$ such that $1=\sum_{\kappa}\chi_\kappa(x)$ for all $x\in\M$.

For all coordinate chart $(U_\kappa,V_\kappa,\kappa)$, there exists a symmetric positive definite matrix $g_\kappa(x):=(g_{j\ell}^\kappa)_{1\leq j,\ell\leq d}$ with smooth and real valued coefficients on $V_\kappa$ such that the Laplace-Beltrami operator $P_0=-\Delta_g$ reads in $(U_\kappa,V_\kappa,\kappa)$ as 
\begin{align*}
    P_0^\kappa:=-\kappa_*\Delta_g\kappa^*=-\sum_{j,\ell=1}^d|g_{\kappa}(x)|^{-1}\partial_j(|g_{\kappa}(x)|g^{j\ell}_\kappa(x)\partial_\ell),
\end{align*}
where $|g_\kappa(x)|=\sqrt{\det(g_\kappa(x))}$ and $(g_\kappa^{j\ell}(x))_{1\leq j,\ell\leq d}:=(g_\kappa(x))^{-1}$. Thus in the chart $(U_\kappa,V_\kappa,\kappa)$, the Klein-Gordon operator reads as $P^\kappa_m=\kappa_* P_m\kappa^*$.\\\\

We now recall some results from the semiclassical functional calculus that will be used throughout the paper. For any $\nu\in\mathbb{R}$, we consider the symbol class $\mathcal{S}(\nu)$ the space of smooth functions $a_h$ on $\mathbb{R}^{2d}$ (may depend on $h$) satisfying
\begin{align*}
   \sup_{h\in (0,1]}|\partial_x^\alpha \partial_\xi^\beta a_h(x,\xi)|\leq C_{\alpha\beta}\left<\xi\right>^{\nu-|\beta|},
\end{align*}
for any $x,\xi\in \mathbb{R}^d$. We also need $\mathcal{S}(-\infty):=\cap_{\nu\in\mathbb{R}}\mathcal{S}(\nu)$. We define the semiclassical pseudodifferential operator on $\R^d$ with a symbol $a_h\in S(\nu)$ by
\begin{align}
    \Op(a_h)u(x):=\frac{1}{(2\pi h)^d}\iint_{\mathbb{R}^{2d}}e^{ih^{-1}(x-y)\cdot \xi}a_h(x,\xi)u(y)dyd\xi
\end{align}
where $u\in \mathscr{S}(\mathbb{R}^d)$ the Schwartz space. 

On a manifold $\M$, for a given $a_h\in \mathcal{S}(\nu)$ the semiclassical pseudo-differential operator is defined as follows
\begin{align*}
    \Opk(a_h):=\kappa^* Op_h(a_h)\kappa_*.
\end{align*}
If nothing is specified, the operator $\Opk(a_h)$ maps $C^\infty_0(U_\kappa)$ to $C^\infty(U_\kappa)$. In the case $\supp(a_h)\subset V_\kappa\times \R^d$, we have $\Opk(a_h)$ maps $C^\infty_0(U_\kappa)$ to $C^\infty_0(U_\kappa)$ hence to $C^\infty(\M)$.

\medskip

We are going to construct an $h$-dependent WKB approximation in order to obtain an $h$-dependent phase term $S_h$. To do so, we first introduce the following $h$-dependent symbol $p_{\tilde{m},h}^\kappa$:
\begin{align}\label{eq:principalsymbol}
    p_{\tilde{m},h}^\kappa (x, \xi):=g^{j\ell}_\kappa(x) \xi_j\xi_\ell+h^2\tilde{m}^2
\end{align}
with the choice of $\tilde{m}$ given by \eqref{eq:m}. In order to obtain the dispersive estimate \eqref{eq:disper-KG}, we will have to slightly modify the principal symbol $p^\kappa_{0,0}$ of the operator $h^2P_m^\kappa$ into an ``$h$-dependent principal symbol'' $p^\kappa_{\tilde{m},h}$.

Let us now describe the relationship between the general operator $f(h^2P_m)$ and the $h$-dependent symbol $f(p^\kappa_{\tilde{m},h})$.
In what follows, several cut-off functions will appear; we will denote them by $\chi^{(j)}$ for $j=1,2,3,\dots$ with the spirit that, as we shall see, $\chi^{(n)}=1$ near $\supp(\chi_k^{(n-1)})$.

\begin{lemma}\label{lem:approx}
Let $\chi^{(1)}_\kappa\in C^\infty_0(U_\kappa)$ be an element of a partition of unity on $\M$ and $\tilde{\chi}^{(2)}_\kappa\in C^\infty_0(U_\kappa)$ be such that $\chi^{(2)}_\kappa=1$ near $\supp(\chi^{(1)}_\kappa)$. Then for $f\in C^\infty_0(\R)$, $m,m'\geq 0$ and any $N\geq 1$,
\begin{align}\label{eq:approx}
    f(h^2P_m)\chi_\kappa^{(1)}=\sum_{j=0}^{N-1}h^j\chi^{(2)}_\kappa\Opk(q^\kappa_{j,h})\chi^{(1)}_\kappa+h^NR_{\kappa,N}(h),
\end{align}
where $q^\kappa_{j,h}\in \mathcal{S}(-\infty)$ with $\supp(q^\kappa_{j,h})\subset \supp(f\circ p^\kappa_{m',h})$ for $j=0,\cdots,N-1$. Moreover, $q^\kappa_{0,h}=f\circ p^\kappa_{m',h}$ and, for any integer  $0\leq n \leq \frac{N}{2}$, there exists $C>0$ such that for all $h\in (0,1]$,
\begin{align}\label{eq:para-RN}
    \|R_{\kappa,N}(h)\|_{H^{-n}(\M)\to H^n(\M)}\leq Ch^{-2n}.
\end{align}
\end{lemma}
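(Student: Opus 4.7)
The proof is a standard application of the Helffer--Sj\"ostrand functional calculus combined with a semiclassical resolvent parametrix, essentially parallel to the construction used in Burq--G\'erard--Tzvetkov \cite{burq2004strichartz} and Dinh \cite{dinh2016strichartz}. The only real novelty is that the leading symbol we peel off is the $h$-dependent object $p^\kappa_{m',h}$ rather than the purely classical principal symbol $g^{j\ell}_\kappa\xi_j\xi_\ell$; this modification is harmless since the difference $h^2(m')^2$ is a uniformly bounded perturbation in $\mathcal{S}(0)$.

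\textbf{Step 1 (Helffer--Sj\"ostrand representation).} Pick an almost analytic extension $\tilde{f}\in C_0^\infty(\mathbb{C})$ of $f$ vanishing on $\mathbb{R}$ to infinite order, so that
\[
|\bar\partial\tilde{f}(z)|\leq C_M |\operatorname{Im} z|^M\quad\text{for every }M\geq 1,
\]
and $\operatorname{supp}\tilde f$ lies in a fixed neighbourhood of $\operatorname{supp} f$. Since $P_m$ is self-adjoint and nonnegative on $L^2(\mathcal{M})$,
\[
f(h^2 P_m)=-\frac{1}{\pi}\int_{\mathbb{C}}\bar\partial\tilde{f}(z)\,(z-h^2 P_m)^{-1}\,dL(z).
\]

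\textbf{Step 2 (Parametrix for the resolvent in the chart).} In the chart $(U_\kappa,V_\kappa,\kappa)$ the full semiclassical symbol of $h^2 P^\kappa_m$ decomposes as $p^\kappa_{m',h}(x,\xi)+h\, s^\kappa_1(x,\xi)+h^2 s^\kappa_2(x,\xi)$ with $s^\kappa_j\in\mathcal{S}(2-j)$ independent of $h$, absorbing the mismatch $h^2(m^2-(m')^2)$ and the subprincipal divergence terms coming from $|g_\kappa|^{-1}\partial_j(|g_\kappa|g^{j\ell}_\kappa\cdot)$. We construct iteratively symbols $r^\kappa_{j,h}(x,\xi,z)$ such that
\[
(z-h^2 P_m)\,\chi^{(2)}_\kappa\sum_{j=0}^{N-1}h^j\Opk\!\bigl(r^\kappa_{j,h}(z)\bigr)\chi^{(1)}_\kappa=\chi^{(1)}_\kappa+h^N E^\kappa_N(z),
\]
by matching powers of $h$ using the semiclassical symbol calculus. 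The leading term is $r^\kappa_{0,h}(x,\xi,z)=(z-p^\kappa_{m',h}(x,\xi))^{-1}$; each subsequent $r^\kappa_{j,h}$ is a rational function of $z$ whose poles lie only at $z=p^\kappa_{m',h}(x,\xi)$, and the factor $\chi^{(2)}_\kappa$ (which equals $1$ near $\supp\chi^{(1)}_\kappa$) localises the construction inside $V_\kappa$ so that compositions stay well defined on $\mathcal{M}$.

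\textbf{Step 3 (Define $q^\kappa_{j,h}$).} Set
\[
q^\kappa_{j,h}(x,\xi):=-\frac{1}{\pi}\int_{\mathbb{C}}\bar\partial\tilde{f}(z)\,r^\kappa_{j,h}(x,\xi,z)\,dL(z).
\]
The Cauchy--Pompeiu formula applied pointwise in $(x,\xi)$ to $r^\kappa_{0,h}$ gives $q^\kappa_{0,h}=f\circ p^\kappa_{m',h}$. For $j\geq 1$, the rational structure of $r^\kappa_{j,h}$ in $z$ together with the infinite-order vanishing of $\bar\partial\tilde f$ on $\mathbb{R}$ absorbs all polar singularities; integration by parts in $z$ shows that each derivative $\partial_x^\alpha\partial_\xi^\beta q^\kappa_{j,h}$ is controlled by derivatives of $f\circ p^\kappa_{m',h}$, so $q^\kappa_{j,h}\in\mathcal{S}(-\infty)$ uniformly in $h\in(0,1]$, and $\supp q^\kappa_{j,h}\subset\supp(f\circ p^\kappa_{m',h})$.

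\textbf{Step 4 (Remainder estimate and main obstacle).} Inserting the parametrix into Step 1 yields \eqref{eq:approx} with
\[
R_{\kappa,N}(h)=-\frac{1}{\pi}\int_{\mathbb{C}}\bar\partial\tilde{f}(z)\,(z-h^2 P_m)^{-1}E^\kappa_N(z)\,dL(z).
\]
The delicate point, and where I expect the main technical work to lie, is bounding $R_{\kappa,N}(h)$ from $H^{-n}(\mathcal{M})$ to $H^n(\mathcal{M})$. Using standard semiclassical Sobolev continuity of pseudodifferential operators together with the resolvent bound $\|(z-h^2 P_m)^{-1}\|_{L^2\to L^2}\leq |\operatorname{Im} z|^{-1}$, one obtains an estimate of the form $\|E^\kappa_N(z)\|_{H^{-n}\to H^n}\leq C_n\, h^{-2n}|\operatorname{Im} z|^{-k(n,N)}$ for some $k(n,N)$; choosing $\tilde f$ so that $\bar\partial\tilde f$ vanishes on $\mathbb{R}$ to sufficiently high order (larger than $k(n,N)$) then makes the $z$-integral convergent and produces \eqref{eq:para-RN}. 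Keeping track of the exact powers of $|\operatorname{Im} z|^{-1}$ throughout the parametrix construction, while maintaining uniform symbolic bounds in $h\in(0,1]$ for the $h$-dependent leading symbol, is the only nontrivial bookkeeping step.
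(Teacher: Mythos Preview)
Your proposal is correct and follows essentially the same approach as the paper: the paper's proof is a one-line reference to \cite[Proposition 2.1]{burq2004strichartz} and \cite[Proposition 3.2]{dinh2016strichartz}, noting only that one replaces the classical principal symbol by the $h$-dependent $p^\kappa_{m',h}$, and your Helffer--Sj\"ostrand/parametrix argument is precisely the content of those references with exactly that modification. Your identification of the main bookkeeping issue (uniform-in-$h$ symbolic bounds and tracking powers of $|\operatorname{Im} z|^{-1}$ for the remainder) matches what is needed.
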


\begin{proof}
The proof closely follows the one of \cite[Proposition 2.1]{burq2004strichartz} or \cite[Proposition 3.2]{dinh2016strichartz}, and we only need to change the principal symbol of $h^2P_m$ in \cite[Proposition 2.1]{burq2004strichartz}  with our symbol $p_{m',h}^\kappa$ as defined in \eqref{eq:principalsymbol}. We omit the details.
\end{proof}

 Before going further, let us introduce the following auxiliary functions: for a given $\phi\in C^\infty_0(\R\backslash[-2\tilde{m}^2,2\tilde{m}^2])$ we take
 \begin{equation}\label{psitilde}
 \widetilde{\psi}\in C^\infty_0(\R\setminus\{0\}):\forall\:h\in (0,1]\: {\rm and}\:\lambda\in \supp(\phi),\; \widetilde{\psi}(\lambda+h^2\tilde{m}^2)=1
 \end{equation}
 and
 \begin{equation}\label{psi}
\psi(\lambda)=\widetilde{\psi}(\lambda)\lambda^{1/2}.
 \end{equation}
  Obviously, $\psi\in C^\infty_0(\R)$. The idea is that the function $\psi$ helps regularize the square root of the operator $P_m$, in view of applying Lemma \ref{lem:approx}. We have that 
 \begin{align}\label{eq:P-psi}
     e^{ih^{-1}t\psi(h^2 P_m)}\phi(-h^2\Delta_g)=e^{itP_m^{1/2}}\phi(-h^2\Delta_g).
 \end{align}

 According to the partition of unity and \eqref{eq:P-psi}, it suffices to consider the operator $e^{itP_m^{1/2}}\phi(-h^2\Delta_g)$ on a chart, i.e.,
\begin{align*}
    e^{it P_m^{1/2}}\phi(-h^2\Delta_g)\chi^{(1)}_\kappa= e^{ih^{-1}t\psi(h^2P_m)}\phi(-h^2\Delta_g)\chi^{(1)}_\kappa,\quad \kappa\in\mathcal{F}
\end{align*}
where $\chi^{(1)}_\kappa\in C^\infty_0(U_\kappa)$ is an element of a partition of unity on $\M$. Using Lemma \ref{lem:approx}, we infer that there is a symbol $a_\kappa\in \mathcal{S}(-\infty)$ satisfying $\supp(a_\kappa)\subset \supp(\phi\circ p_{0,0}^\kappa)$ and an operator $R_{1,\kappa,N}$ satisfying \eqref{eq:para-RN} such that
\begin{align}\label{eq:phi-dec}
    e^{ih^{-1}t\psi(h^2P_m)}\phi(-h^2\Delta_g)\chi^{(1)}_\kappa= e^{ih^{-1}t\psi(h^2P_m)}\chi^{(2)}_\kappa\Opk(a_\kappa)\chi^{(1)}_\kappa+ h^N e^{ih^{-1}t\psi(h^2P_m)} R_{1,\kappa,N}(h)
\end{align}
with $\chi^{(2)}_\kappa$ given in Lemma \ref{lem:approx}. Let 
\begin{align*}
    u(t)=e^{ih^{-1}t\psi(h^2P_m)}{\chi}^{(2)}_\kappa\Opk(a_\kappa)\chi^{(1)}_\kappa;
\end{align*}
then $u$ solves the following semi-classical evolution equation 

\begin{align}\label{eq:semi-equ-u}
    \left\{\begin{aligned}
    (ih \partial_t + \psi(h^2P_m))u(t)&=0,\\
    u|_{t=0}&={\chi}^{(2)}_\kappa\Opk(a_\kappa)\chi^{(1)}_\kappa u_0.
    \end{aligned} \right.
\end{align}

We can now decompose the operator $\psi(h^2 P_m)$ on manifold $\M$: letting $\chi^{(3)}_\kappa, \chi^{(4)}_\kappa\in C^\infty_0(U_\kappa)$ such that $\chi^{(3)}_\kappa=1$ near $\supp({\chi}^{(2)}_\kappa)$ and $\chi^{(4)}_\kappa=1$ near $\supp(\chi^{(3)}_\kappa)$, and letting $\tilde{m}$ be given by \eqref{eq:m}, Lemma \ref{lem:approx} yields 
\begin{align}\label{eq:psi-dec}
\psi(h^2P_m)\chi^{(3)}_\kappa=\chi^{(4)}_\kappa\Opk(q^\kappa(h))\chi^{(3)}_\kappa+h^N R_{2,\kappa,N}(h),
\end{align}
where 
\begin{equation}\label{eq:q}
q^\kappa(h)=\psi(p^\kappa_{\tilde{m},h})+\sum_{j=1}^{N-1}h^j q^\kappa_{j,h}
\end{equation}
with $q^\kappa_{j,h}\in \mathcal{S}(-\infty)$ and $R_{2,\kappa,N}(h)$ satisfies \eqref{eq:para-RN}.

 \subsection{The WKB approximation and semiclassical dispersive estimates}

Inserting \eqref{eq:psi-dec} into \eqref{eq:semi-equ-u}, the main operator we are going to study is 
\begin{align*}
    i h \partial_t  + \Opk(q^\kappa(h))
\end{align*}
on $\M$ which is equivalent to 
\begin{align*}
    i h \partial_t  + \Op(q^\kappa(h))
\end{align*}
on $\R^d$. Then the following result represents the key ingredient in the proof of Proposition \ref{prop:disper}.

\begin{lemma}\label{th:JN} 
Let $\phi\in C^\infty_0(\R\setminus\{0\})$, $K$ be a small neighborhood of $
\supp(\phi)$ not containing the origin, $a\in \mathcal{S}(-\infty)$ with $\supp(a)\subset (p_{0,0}^\kappa)^{-1}(\supp(\phi))$ and let $v_0 \in C^\infty_0 (\R^d)$. Then there exist $t_0>0$ small enough, $S_h\in C^\infty([-t_0,t_0]\times \R^{2d})$ and a sequence of functions $a_{j,h}(t,\cdot,\cdot)$ satisfying $\supp(a_{j,h}(t,\cdot,\cdot))\subset (p_{0,0}^\kappa)^{-1}(K)$ uniformly w.r.t. $t\in [-t_0,t_0]$ and w.r.t. $h\in (0,1]$ such that for all $N\geq 1$, 
\begin{align*}
    (i h \partial_t  + \Op(q^\kappa(h)))J_N(t)= R_N(t)
\end{align*} 
where $q^\kappa$ is given by \eqref{eq:q}, 
\begin{align}\label{eq:JN}
    J_N(t)v_0(x)&=\sum_{j=0}^{N}h^jJ_h(S_h(t),a_{j,h}(t))v_0(x)\notag\\
    &=\sum_{j=0}^{N}h^j\left[(2\pi h)^{-d}\iint_{\R^{2d}}e^{ih^{-1}(S_h(t,x,\xi)-y\cdot\xi)}a_{j,h}(t,x,\xi)v_0(y)\;dy d\xi\right],
\end{align}
$J_N(0)=\Op(a)$ and the remainder $R_N(t)$ satisfies that for any $t\in [-t_0,t_0]$, $h\in (0,1]$ and $n\leq \frac{N}{2}$ 
\begin{align}\label{eq:est-RN}
    \|R_N(t)\|_{H^{-n}(\R^d)\to H^{n}(\R^d)}\leq Ch^{N-2n}.
\end{align}
Moreover, there exists a constant $C>0$ such that 
\begin{enumerate}
    \item for all $t\in [-t_0,t_0]$ and all $h\in (0,1]$,
    \begin{align}\label{eq:est-JN-W}
        \|J_N(t)\|_{L^1(\R^d)\to L^\infty(\R^d)}\leq Ch^{-d}(1+|t|h^{-1})^{-\frac{d-1}{2}};
    \end{align}
    \item for all $t\in h^{1/2}[-t_0,t_0]$ and all $h\in (0,1]$,
    \begin{align}\label{eq:est-JN-KG}
        \|J_N(t)\|_{L^1(\R^d)\to L^\infty(\R^d)}\leq Ch^{-d-1}(1+|t|h^{-1})^{-\frac{d}{2}}.
    \end{align}
\end{enumerate}
\end{lemma}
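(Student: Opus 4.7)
The plan is to construct the WKB ansatz \eqref{eq:JN} by choosing the phase $S_h$ and the amplitudes $a_{j,h}$ so that, when $ih\partial_t + \Op(q^\kappa(h))$ is applied term-by-term, the orders $h^0, h^1, \dots, h^N$ cancel. The scheme is classical (see \cite{burq2004strichartz,dinh2016strichartz,robert1987autour}), but here it must be run with the $h$-dependent principal symbol $\psi(p^\kappa_{\tilde m, h})$ from \eqref{eq:q}. The key ingredient is the standard asymptotic expansion of a semiclassical pseudodifferential operator acting on an oscillatory integral: for $b\in\mathcal{S}(-\infty)$ and smooth $S,a$,
\[
\Op(b)\bigl[e^{iS/h}a\bigr](x) \sim e^{iS(x,\xi)/h}\sum_{\alpha} \frac{h^{|\alpha|}}{i^{|\alpha|}\alpha!}\,(\partial_\xi^\alpha b)(x,\nabla_x S)\cdot D_y^\alpha\bigl[e^{i\Phi/h}a\bigr]\big|_{y=x},
\]
where $\Phi(x,y,\xi) = S(y,\xi)-S(x,\xi)-(y-x)\cdot\nabla_x S(x,\xi) = \mathcal{O}(|y-x|^2)$. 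Matching this expansion against $ih\partial_t$ applied to the amplitudes and sorting by powers of $h$ produces the equations I need.

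First I would solve, at order $h^0$, the Hamilton--Jacobi equation
\[
\partial_t S_h \;-\; \psi\bigl(p^\kappa_{\tilde m,h}(x,\nabla_x S_h)\bigr)=0,\qquad S_h(0,x,\xi)=x\cdot\xi,
\]
which on the support of the relevant amplitudes (where $\widetilde\psi(p^\kappa_{\tilde m,h})=1$) reduces to $\partial_t S_h=\sqrt{g^{j\ell}_\kappa(x)\partial_j S_h\partial_\ell S_h + h^2\tilde m^2}$. Since the Hamiltonian and all its $(x,\xi)$-derivatives are bounded uniformly in $h\in(0,1]$ on a neighborhood of $\supp(a)$, which stays away from $\xi=0$, the method of characteristics yields a smooth solution $S_h\in C^\infty([-t_0,t_0]\times\R^{2d})$ on a uniform interval, with the expansion $S_h(t,x,\xi)=x\cdot\xi+t\sqrt{g^{j\ell}\xi_j\xi_\ell+h^2\tilde m^2}+\mathcal{O}(t^2)$ of Remark \ref{rem:6}. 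The orders $h^j$, $j\geq 1$, then yield a cascade of linear transport equations of the form
\[
\partial_t a_{j,h} \;-\; \nabla_\xi\psi\bigl(p^\kappa_{\tilde m,h}(x,\nabla_x S_h)\bigr)\cdot\nabla_x a_{j,h} \;+\; V_h\,a_{j,h} \;=\; F_{j,h}(a_{0,h},\dots,a_{j-1,h}),
\]
with $a_{0,h}(0)=a$, $a_{j,h}(0)=0$ for $j\geq 1$, and coefficients bounded uniformly in $h$. These are solved along the characteristic flow; an induction on $j$ shows that the $\xi$-support of each $a_{j,h}$ stays in a slight enlargement of $(p^\kappa_{0,0})^{-1}(\supp\phi)$ and that all derivatives are $h$-uniformly bounded, as required. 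The remainder $R_N(t)$ collects the tail of the asymptotic expansion together with the contribution of the $\mathcal{S}(-\infty)$ correctors $h^j q^\kappa_{j,h}$ in \eqref{eq:q}; the bound \eqref{eq:est-RN} then follows from a Calder\'on--Vaillancourt--type estimate analogous to the one used in Lemma \ref{lem:approx}.

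The main obstacle, and the technical heart of the proof, is the uniform dispersive estimate \eqref{eq:est-JN-KG}. The Schwartz kernel of $J_h(S_h(t),a_{j,h}(t))$ is an oscillatory integral in $\xi$ with phase $h^{-1}(S_h(t,x,\xi)-y\cdot\xi)$, and the stationary points are given by $\nabla_\xi S_h = y$. For the wave-type bound \eqref{eq:est-JN-W}, the Hessian $\partial^2_\xi S_h \sim t\,\partial^2_\xi\sqrt{g^{j\ell}\xi_j\xi_\ell}$ has rank $d-1$, and a non-stationary phase argument in the radial direction combined with $(d-1)$-dimensional stationary phase in the angular variables produces the $(|t|h^{-1})^{-(d-1)/2}$ decay exactly as in \cite{dinh2016strichartz}. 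For \eqref{eq:est-JN-KG} the new input is that the $h$-dependent phase $\sqrt{g^{j\ell}\xi_j\xi_\ell+h^2\tilde m^2}$ has full-rank Hessian in $\xi$, of size $\mathcal{O}(1)$ in the angular directions but only of size $\mathcal{O}(h^2)$ in the radial direction; rescaling the radial variable by $h$ turns the stationary-phase parameter in that direction into $|t|h$, and $d$-dimensional stationary phase goes through provided $|t|h\lesssim 1$, i.e., $|t|\leq h^{1/2}t_0$, giving the advertised $(|t|h^{-1})^{-d/2}$ decay. The delicate point is controlling the $\mathcal{O}(t^2)$ correction to $S_h$ (from the non-flat metric) uniformly through this anisotropic rescaling so that the non-degeneracy of the Hessian is not destroyed; this is the source of the $\frac{1}{2p}$ loss in Theorem \ref{th:Stri-half}.
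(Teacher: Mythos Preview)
Your construction of the phase $S_h$ and amplitudes $a_{j,h}$ via the Hamilton--Jacobi and transport equations, together with the remainder bound \eqref{eq:est-RN}, follows the paper's argument and is correct; likewise the wave-type bound \eqref{eq:est-JN-W} goes through as you describe.

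The gap is in the Klein--Gordon estimate \eqref{eq:est-JN-KG}. Your proposed mechanism --- ``rescale the radial variable by $h$ so that the stationary-phase parameter there becomes $|t|h$, and run $d$-dimensional stationary phase provided $|t|h\lesssim 1$'' --- does not work. For $|t|\leq h^{1/2}t_0$ one has $|t|h\leq h^{3/2}t_0\to 0$, so that parameter is \emph{small} and stationary phase yields no decay; what is actually needed in the degenerate direction is the uniform Van der Corput bound $\bigl|\int e^{i\lambda F}A\bigr|\leq C(\lambda\inf|F''|)^{-1/2}$ with $\inf|F''|\geq Ch^2$. (The implication $|t|h\lesssim 1\Rightarrow |t|\leq h^{1/2}t_0$ is also not right.) More fundamentally, you have named but not resolved the real obstruction: the $\mathcal{O}(t^2)$ correction in $S_h$ contributes $\mathcal{O}(t)$ to $\nabla_\eta^2\Phi_h$, and for $t\gg h^2$ this swamps the $\sim h^2$ radial eigenvalue, destroying any uniform lower bound on $|\det\nabla_\eta^2\Phi_h|$ or on $|F''|$. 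No anisotropic rescaling repairs this. The paper's resolution is to move that correction into the amplitude: write the phase as the explicit $\widetilde\Phi_h(t,x,y,\eta)=t^{-1}\sqrt{g(x)}(x-y)\cdot\eta+\sqrt{|\eta|^2+h^2\tilde m^2}$ and absorb the factor $\exp\bigl(ih^{-1}t^2\int_0^1(1-\theta)\partial_t^2 S_h\,d\theta\bigr)$ into a new amplitude $\widetilde a_h$. Differentiating $\widetilde a_h$ produces powers of $h^{-1}t^2$, so it stays uniformly in $\mathcal{S}(-\infty)$ precisely when $|t|\leq h^{1/2}t_0$ --- this is the true source of the time restriction and hence of the $\tfrac{1}{2p}$ loss. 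With the clean phase $\widetilde\Phi_h$ one then applies the parameter-dependent stationary phase theorem in the $d-1$ nondegenerate variables $\eta^{(j)}$ and Van der Corput in the remaining one; a direct $d$-dimensional stationary phase is not available because the Hessian of $\widetilde\Phi_h$ is not uniformly nondegenerate in $h$.
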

\begin{remark}\label{re:stat}
Compared with the existing results on the dispersive estimates for $J_N$-type oscillatory integrals (see, e.g., \cite{kapitanski1989some,burq2004strichartz,dinh2016strichartz}), \eqref{eq:est-JN-KG} is much more complicated even if eventually all the results are based on the stationary phase theorem. In fact, estimate  \eqref{eq:est-JN-KG} involves a much deeper insight into the behaviour of the eigenvalues of the Hessian matrix $\nabla_{\eta}^2\widetilde{\Phi}_h$ where, as we shall see, 
\begin{equation*}
\widetilde{\Phi}_{h}(t,x,y,\eta)=t^{-1}\sqrt{g(x)}(x-y)\cdot \eta+\sqrt{|\eta|^2+h^2\tilde{m}^2}.
\end{equation*}
 More precisely, $\nabla_{\eta}^2\widetilde{\Phi}_h$ has $d-1$ eigenvalues away from $0$ uniformly w.r.t. $h$ and it has a unique eigenvalue of the size $\mathcal{O}(h^2)$. In order to apply the stationary phase theorem for \eqref{eq:est-JN-KG}, we will first need to use the stationary phase theorem to deal with a submatrix of $\nabla_{\eta}^2\widetilde{\Phi}_h$ associated with the $d-1$ eigenvalues which are away from $0$ uniformly w.r.t $h$, and then use the Van der Corput lemma in order to deal with the remaining terms associated with the eigenvalue of size $\mathcal{O}(h^2)$. This strategy has been used to deal with the Klein-Gordon equations \cite{nakanishi2011invariant,zhang2019strichartz}.
\end{remark}

\begin{proof}
We split the proof into three steps: the construction of the WKB approximation, the estimates for the remainder $R_N$ for \eqref{eq:est-RN} and the semiclassical dispersive estimates  \eqref{eq:est-JN-W} and \eqref{eq:est-JN-KG}. For the reader's convenience, we will omit the index $\kappa$ since the chart has been fixed and we will borrow the notations and the results from \cite[Step 1 and Step 2, Proof of Theorem 2.7]{dinh2016strichartz} directly. The arguments of Step 1. and Step 2. below are essentially the same as in \cite[Step 1 and Step 2, Proof of Theorem 2.7]{dinh2016strichartz} (except taking the supremum over $h\in (0,1]$), thus we only give the sketch of the proof of these two steps.
\medskip

{\bf Step 1: the WKB approximation.}\label{sec:WKB}

\medskip

We are going to seek for $J_N(t)$ satisfying \eqref{eq:JN}. Before going further, we look for $S_h$ satisfying the following Hamilton-Jacobi equation
\begin{align}\label{eq:Sh}
    \partial_t S_h(t)-\psi(p_{\tilde{m},h})(x,\nabla_x S_h(t))=0,
\end{align}
with $S_h(0)=x\cdot \xi$. 
\begin{proposition}\label{prop:HJ}
Let $\psi$ be given by \eqref{psitilde}-\eqref{psi}.
There exists $t_0>0$ small enough and a unique solution $S_h\in C^\infty([-t_0,t_0]\times \mathbb{R}^{2d})$ to the Hamilton-Jacobi equation
\begin{equation}\label{eq:Hamitlonian-Jacobi}
    \left\{\begin{aligned}
    \partial_t S_h(t,x,\xi)-\psi(p_{\tilde{m},h})(x,\nabla_x S_h)&=0,\\
    S_h(0,x,\xi)&=x\cdot \xi.
    \end{aligned}
    \right.
\end{equation}
Moreover, for all $\alpha,\beta\in\mathbb{N}^d$, there exists $C_{\alpha\beta}>0$ independent of $h$ (with $h\in (0,1]$) such that for all $t\in [-t_0,t_0]$ and all $x,\xi\in \mathbb{R}^d$,
\begin{align}
    \sup_{h\in(0,1]}|\partial_x^\alpha\partial_\xi^\beta(S_h(t,x,\xi)-x\cdot \xi)|&\leq C_{\alpha\beta},\quad |\alpha+\beta|\geq 1,\label{eq:S1}\\
    \sup_{h\in(0,1]}\left|\partial_x^\alpha\partial_\xi^\beta\left(S_h(t,x,\xi)-x\cdot \xi-t\psi(p_{\tilde{m},h})(x,\xi)\right)\right|&\leq C_{\alpha,\beta}|t|^2.\label{eq:S2}
\end{align}
\end{proposition}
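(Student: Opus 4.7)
The plan is to solve the Hamilton-Jacobi equation by the classical bicharacteristic method applied to the $h$-dependent Hamiltonian $H_h(x,\xi):=\psi(p_{\tilde{m},h}(x,\xi))$. The crucial structural observation is that $p_{\tilde{m},h}-p_{\tilde{m},0}=h^2\tilde{m}^2$ is a constant in $(x,\xi)$, so every $(x,\xi)$-derivative of $p_{\tilde{m},h}$ coincides with that of $p_{\tilde{m},0}=g^{j\ell}(x)\xi_j\xi_\ell$ and is therefore $h$-independent. Combined with the uniform compactness of $\supp(\psi)$ and the boundedness of all derivatives of $\psi$, this places the whole family $\{H_h\}_{h\in(0,1]}$ inside a single bounded set of $C^\infty_b(\R^{2d})$ whose $\xi$-support lies in a fixed compact set, so that all constants below can be chosen independent of $h$.

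To construct $S_h$, I first integrate the Hamilton system
\begin{align*}
\dot{X}(t)=-(\nabla_\xi H_h)(X,\Xi),\qquad \dot{\Xi}(t)=(\nabla_x H_h)(X,\Xi),\qquad X(0)=y,\quad \Xi(0)=\xi.
\end{align*}
Picard-Lindel\"of with an $h$-uniform Lipschitz constant provides a common existence interval $[-t_0,t_0]$ and a smooth flow $(X_h,\Xi_h)(t,y,\xi)$ whose derivatives of any order are bounded uniformly in $h\in(0,1]$. After shrinking $t_0$ if needed, the map $y\mapsto X_h(t,y,\xi)$ is an $h$-uniform smooth diffeomorphism with inverse $Y_h(t,x,\xi)$ also $h$-uniformly smooth. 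I then define $S_h$ by the standard generating-function formula based on this flow; it is classical to check that $S_h\in C^\infty([-t_0,t_0]\times\R^{2d})$ solves \eqref{eq:Hamitlonian-Jacobi} and is unique. Estimate \eqref{eq:S1} then follows either directly from the $h$-uniform smoothness of $X_h,\Xi_h,Y_h$, or by differentiating \eqref{eq:Hamitlonian-Jacobi} in $(x,\xi)$ and propagating linear transport equations for $\partial_x^\alpha\partial_\xi^\beta(S_h-x\cdot\xi)$ along the bicharacteristics; these are closed inductively on $|\alpha|+|\beta|$ by Gronwall's inequality, using only lower-order derivatives of $S_h-x\cdot\xi$ and the $h$-uniformly bounded derivatives of $H_h$ as source terms.

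Finally, for \eqref{eq:S2} I Taylor-expand in $t$ at $t=0$. From \eqref{eq:Hamitlonian-Jacobi} and $S_h(0,x,\xi)=x\cdot\xi$ one reads $\partial_t S_h(0,x,\xi)=\psi(p_{\tilde{m},h})(x,\xi)$, hence
\[
S_h(t,x,\xi)-x\cdot\xi-t\,\psi(p_{\tilde{m},h})(x,\xi)=\int_0^t (t-s)\,\partial_t^2 S_h(s,x,\xi)\,ds.
\]
Differentiating this identity in $(x,\xi)$ and controlling $\partial_t^2 S_h$ through \eqref{eq:Hamitlonian-Jacobi} (which expresses it as $(\nabla_\xi\psi(p_{\tilde{m},h}))(x,\nabla_x S_h)\cdot\nabla_x\partial_t S_h$, already controlled by \eqref{eq:S1}) yields the claimed $|t|^2$ bound uniformly in $h$. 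I expect the main technical obstacle to be exactly the bookkeeping of $h$-uniformity when iterating the derivative estimates; however, thanks to the structural observation in the first paragraph the $h$-dependence effectively disappears after a single $(x,\xi)$-differentiation, so every induction step sees only $h$-independent coefficients and the argument goes through cleanly.
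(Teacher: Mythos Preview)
Your proposal is correct and follows essentially the same route as the paper. The paper's own proof simply observes that $\psi(p_{\tilde m,h})$ has all $(x,\xi)$-derivatives bounded uniformly in $h\in(0,1]$ and then defers to \cite[Appendix A]{dinh2016strichartz}, where exactly the bicharacteristic construction you describe is carried out; your write-up is an explicit unpacking of that reference with the $h$-uniformity tracked.

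One small imprecision worth correcting: it is not true that ``the $h$-dependence effectively disappears after a single $(x,\xi)$-differentiation'' or that ``every induction step sees only $h$-independent coefficients''. For instance $\partial_{\xi_j}H_h=\psi'(p_{\tilde m,h})\,\partial_{\xi_j}p_{0,0}$ still depends on $h$ through the argument of $\psi'$. What is true (and what you correctly state earlier) is that all such factors are \emph{uniformly bounded} in $h$ because $\psi\in C^\infty_0(\R)$ and $0\le p_{\tilde m,h}-p_{0,0}\le \tilde m^2$. That uniform bound is exactly the hypothesis the paper verifies before invoking \cite{dinh2016strichartz}, and it is all your Picard--Lindel\"of/Gronwall inductions actually use, so the argument goes through unchanged.
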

\begin{proof}
This proposition holds since $\psi(p_{\tilde{m},h})$ satisfies the following condition: for all $\alpha,\beta\in \mathbb{N}^d$ there exists $C_{\alpha\beta}>0$ such that for all $x,\xi\in\mathbb{R}^d$,
\begin{align*}
    \sup_{h\in (0,1]}|\partial_x^\alpha\partial_\xi^\beta q_{0,h}|\leq C_{\alpha,\beta}.
\end{align*}
Indeed, it satisfies the condition (A.2) in \cite[Appendix A]{dinh2016strichartz} uniformly w.r.t. $h\in(0,1]$. Then following the argument in \cite[Appendix A]{dinh2016strichartz}, we get a unique solution $S_h$ to the Hamilton-Jacobi equation \eqref{eq:Hamitlonian-Jacobi} and $S_h$ satisfies \cite[Eqns. (2.19) and (2.20)]{dinh2016strichartz} uniformly w.r.t. $h\in(0,1]$. Hence \eqref{eq:S1} and \eqref{eq:S2}. 
\end{proof}
In the next Proposition, we describe the action of a pseudodifferential operator on a Fourier integral operator.

\begin{proposition}\label{prop:b-JN-dec}
Let $b_h\in \mathcal{S}(-\infty)$ and $c_h\in \mathcal{S}(-\infty)$  and $S_h\in C^\infty(\mathbb{R}^{2d})$ such that for all $\alpha,\beta\in \mathbb{N}^d,\;|\alpha+\beta|\geq 1$, there exists $C_{\alpha\beta}>0$,
\begin{align}
    \sup_{h\in (0,1]}|\partial_x^\alpha\partial_\xi^\beta (S_h(x,\xi)-x\cdot \xi)|\leq C_{\alpha\beta},\quad \textrm{for all } x,\xi\in\mathbb{R}^d.
\end{align}
Then
\begin{align*}
    Op_h(b_h)\circ J_h(S_h,c_h)=\sum_{j=0}^{N-1}h^j J_h(S_h,(b_h\triangleleft c_{h})_j)+h^NJ_h(S_h,r_N(h)),
\end{align*}
where $(b_h\triangleleft c_h)_j$ is an universal linear combination of
\begin{align*}
    \partial_\eta^\beta b_h(z,\nabla_x S_h(x,\xi))\partial_x^{\beta-\alpha}c_h(x,\xi)\partial_x^{\alpha_1}S_h(x,\xi)\cdots \partial_x^{\alpha_k}S_h(x,\xi),
\end{align*}
with $\alpha\leq \beta,\;\alpha_1+\cdots+\alpha_k=\alpha$ and $|\alpha_l|\geq 2$ for all $l=1,\cdots,k$ and $|\beta|=j$. The map $(b_h,c_h)\mapsto (b_h\triangleleft c_h)$ and $(b_h,c_h)\mapsto r_N(h)$ are continuous from $\mathcal{S}(-\infty)\times \mathcal{S}(-\infty)$ to $\mathcal{S}(-\infty)$ and $\mathcal{S}(-\infty)$ respectively. In particular, we have
\begin{align*}
    (b_h\triangleleft c_h)_0(x,\xi)&=b_h(x,\nabla_x S_h(x,\xi))c_h(x,\xi),\\
    i(b_h\triangleleft c_h)_1(x,\xi)&=\nabla_\eta b_h(x,\nabla_x S_h(x,\xi))\cdot \nabla_x c_h(x,\xi)+\frac{1}{2}\Tr\left(\nabla_\eta^2 b_h(x,\partial_s S_h(x,\xi))\cdot \nabla_x^2 S_h(x,\xi)\right)\cdot c(x,\xi).
\end{align*}
\end{proposition}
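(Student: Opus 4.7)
My plan is to reduce the composition to a single Fourier integral operator by performing a Kuranishi-type change of variables in $\eta$ followed by a Taylor expansion and integration by parts. Writing out the composition as a four-fold oscillatory integral
\[
\Op(b_h)\circ J_h(S_h,c_h)v_0(x) = (2\pi h)^{-2d}\iiiint e^{\frac{i}{h}[(x-z)\cdot\eta + S_h(z,\xi) - y\cdot\xi]}b_h(x,\eta)c_h(z,\xi)v_0(y)\,dz\,d\eta\,dy\,d\xi,
\]
I introduce the integral average
\[
\tilde\eta(x,z,\xi):= \int_0^1 \nabla_x S_h(x+s(z-x),\xi)\,ds,
\]
so that $S_h(z,\xi)-S_h(x,\xi)=(z-x)\cdot \tilde\eta(x,z,\xi)$ holds identically. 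The hypothesis on $S_h$ ensures (for $h\in(0,1]$ uniformly) that $\eta\mapsto \eta-\tilde\eta(x,z,\xi)$ is a global diffeomorphism of $\R^d$ with unit Jacobian, so the substitution $\eta=\eta'+\tilde\eta(x,z,\xi)$ turns the phase into the much simpler $(x-z)\cdot\eta'+S_h(x,\xi)-y\cdot\xi$, at the price of replacing $b_h(x,\eta)$ with $b_h\big(x,\eta'+\tilde\eta(x,z,\xi)\big)$ in the amplitude.

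Next, I Taylor-expand $b_h$ in its second slot around $\tilde\eta(x,z,\xi)$ up to order $N$:
\[
b_h(x,\eta'+\tilde\eta)=\sum_{|\beta|<N}\frac{(\eta')^\beta}{\beta!}\partial_\eta^\beta b_h(x,\tilde\eta)+\mathcal R_N(x,\eta',z,\xi).
\]
Each term of order $|\beta|$ is treated via the identity $(\eta')^\beta e^{\frac{i}{h}(x-z)\cdot\eta'}=(ih)^{|\beta|}\partial_z^\beta e^{\frac{i}{h}(x-z)\cdot\eta'}$, integrating by parts $|\beta|$ times in $z$ to move the $\partial_z^\beta$ onto the amplitude. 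The remaining $\eta'$-integral then collapses $z$ onto $x$ (distributionally, $(2\pi h)^{-d}\int e^{\frac{i}{h}(x-z)\cdot\eta'}d\eta'=\delta_{\{z=x\}}$), leaving an oscillatory integral only in $(y,\xi)$ with phase $S_h(x,\xi)-y\cdot\xi$, as required. Using Leibniz and the pointwise identity $\partial_z^\gamma\tilde\eta(x,z,\xi)\big|_{z=x}=\frac{1}{|\gamma|+1}\partial_x^\gamma\nabla_x S_h(x,\xi)$ — which guarantees that every derivative landing on $\tilde\eta$ produces a factor $\partial_x^{\alpha_i}S_h$ with $|\alpha_i|\geq 2$ — one reads off the universal combination $(b_h\triangleleft c_h)_j$ stated in the proposition, including the displayed formulas for $j=0,1$; continuity of the maps $(b_h,c_h)\mapsto (b_h\triangleleft c_h)_j$ from $\mathcal S(-\infty)\times\mathcal S(-\infty)$ to $\mathcal S(-\infty)$ then follows from the chain and Leibniz rules.

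Finally, the remainder $h^N J_h(S_h,r_N(h))$ originates from the integral form of $\mathcal R_N$, which expresses it as a convex combination of $\partial_\eta^\beta b_h(x,\tilde\eta+\tau\eta')$ with $|\beta|=N$. The same $N$-fold integration by parts in $z$ converts $(\eta')^\beta$ into $(ih)^N\partial_z^\beta$, producing the announced $h^N$ factor; the residual amplitude $r_N(h)$ is then shown to lie in $\mathcal S(-\infty)$ with semi-norms independent of $h$. The main obstacle, and the only delicate point, is precisely this uniformity in $h\in(0,1]$: because $S_h$, $b_h$ and $c_h$ are themselves $h$-dependent, every derivative estimate produced along the argument must be tracked with $h$-independent constants. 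This is exactly where the hypothesis \eqref{eq:S1} (and the $h$-uniform symbol estimates defining $\mathcal S(-\infty)$) is used — both to justify the Kuranishi substitution globally and to close all symbol bounds on $r_N(h)$ uniformly in $h$.
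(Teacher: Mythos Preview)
Your argument is correct and is in fact the standard direct proof of this composition formula (as in Robert, Th\'eor\`eme~IV.19, or Dinh, Proposition~2.9): translate in $\eta$ by the integral average $\tilde\eta$ to linearize the phase, Taylor-expand the symbol, and collapse the $(z,\eta')$-integral. Your attention to keeping all constants uniform in $h\in(0,1]$ is exactly the point of the proposition.

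The paper's own proof takes a shorter, purely logical route: it simply invokes the $h$-independent version already proved in the literature, observes that for each \emph{fixed} value $\tilde h\in(0,1]$ the triple $(b_{\tilde h},c_{\tilde h},S_{\tilde h})$ satisfies the $h$-independent hypotheses with bounds not depending on $\tilde h$ (this is precisely the uniform hypothesis on $S_h-x\cdot\xi$ and the definition of $\mathcal S(-\infty)$), applies the known result at that frozen value, and then sets $\tilde h=h$. In other words, the paper reduces the $h$-dependent statement to the $h$-independent one by a freezing/diagonal argument, whereas you reprove the underlying result from scratch with the $h$-dependence tracked throughout. Both are valid; the paper's approach is more economical given the existing literature, while yours is self-contained and makes the mechanism (and the precise formulas for $j=0,1$) transparent. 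One small terminological quibble: what you call a ``Kuranishi-type change of variables'' is here just a translation in $\eta$ (the Jacobian is trivially $1$ because $\tilde\eta$ does not depend on $\eta$); the hypothesis on $S_h$ is actually used later, to control the derivatives of $\tilde\eta$ and to close the symbol estimates on the remainder.
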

\begin{proof}
This is a variant of \cite[Proposition 2.9]{dinh2016strichartz} (see  also in \cite[Th\'eor\`eme IV.19]{robert1987autour}, \cite[Lemma 2.5]{ruzhansky2011weighted}) and \cite[Appendix]{bouclet2000distributions} . From \cite[Proposition 2.9]{dinh2016strichartz}, we know that this proposition holds if $b_h,c_h$ and $S_h$ are $h$-independent. Then for any $\widetilde{h}\in (0,1]$, this proposition holds for $b_{\widetilde{h}}$, $c_{\widetilde{h}}$ and $S_{\widetilde{h}}$. Finally, this proposition holds for $h$-dependent symbols by taking $\widetilde{h}=h$.
\end{proof}

We are now in a position to explicitly write down the WKB approximation.
From \eqref{eq:Sh}, Proposition \ref{th:JN} and Proposition \ref{prop:b-JN-dec}, we infer that
\begin{align*}
    (i h\partial_t +\Op(q(h)))J_N= -\sum_{r=0}^N h^rJ_h(S_h(t),c_{r,h}(t)) +h^{N+1}J_h(S(t),r_{N+1}(h,t)),
\end{align*}
(we recall that the symbol $q(h)$ is defined by \eqref{eq:q}), 
where $r_{N+1}\in \mathcal{S}(-\infty)$ and
\begin{align*}
    &c_{0,h}(t)=\partial_t S_h(t)a_{0,h}(t)-\psi(p_{\tilde{m},h})(x,\nabla_x S_h(t))a_{0,h}(t)=0,\\
    & - c_{r,h}(t)=i\partial_t a_{r-1,h}(t) +(\psi(p_{\tilde{m},h})\triangleleft a_{r-1,h})_1 +(q_{1,h}\triangleleft a_{r-1,h})_0\\
    &\qquad\qquad\qquad\qquad\qquad\qquad  + \sum_{k+j+l=r, j\leq r-2}(q_{k,h}\triangleleft a_{j,h}(t))_l,\quad r=1,\cdots,N-1,\\
    & -c_{N,h}(t)=i\partial_ta_{N-1,h} +(\psi(p_{\tilde{m},h})\triangleleft a_{N-1,h})_1 +(q_{1,h}\triangleleft a_{N-1,h})_0  +\sum_{\substack{ k+j+l=N,\\ j\leq N-2}}(q_{k,h}\triangleleft a_{j,h})_l.
\end{align*}
This leads to the following transport equations
\begin{align}
    &i\partial_t a_{0,h}(t) +(\psi(p_{\tilde{m},h})\triangleleft a_{0,h})_1 +(q_{1,h}\triangleleft a_{0,h})_0=0,\label{eq:a1}\\
    & i\partial_t a_{r,h}(t) +(\psi(p_{\tilde{m},h})\triangleleft a_{r,h})_1 + (q_{1,h}\triangleleft a_{r,h})_0=  - \sum_{k+j+l=r+1, j\leq r-1}(q_{k,h}\triangleleft a_{j,h})_l\label{eq:ar}
\end{align}
and
\begin{align}\label{eq:RN}
    R_N(t):=h^{N+1} J_h(S_h(t),r_{N+1}(h,t))
\end{align}
with 
\begin{align}\label{eq:intial}
    a_{0,h}(0,x,\xi)=a(x,\xi),\quad a_{r,h}(0,x,\xi)=0\quad \textrm{for}\quad r=1,\cdots,N.
\end{align}

We rewrite the equations on $a_{r,h}$ as follows
\begin{align}\label{eq:arh}
  \nonumber
  &\partial_t a_{0,h} -V_h(t,x,\xi,h)\cdot \nabla_x a_0 -f_h a_0=0,\\
    &\partial_t a_{r,h}-V_h(t,x,\xi,h)\cdot \nabla_x a_{r,h} -f_h(h)a_{r,h}=g_{r,h}(h)
\end{align}
where 
\begin{align*}
   &V_h(t,x,\xi)=(\partial_\xi \psi(p_{\tilde{m},h}))(x,\nabla_x S_h(t,x)),\\
   &f_h(t,x,\xi)=\frac{1}{2}tr[\nabla_\xi^2 \psi(p_{\tilde{m},h})(x,\nabla_x S_h)\cdot \nabla_x^2 S_h]+iq_{1,h}(x,\nabla_x S_h),\\
   &g_{r,h}(t,x,\xi)=i\sum_{k+j+l=r+1, j\leq r-1}(q_{k,h}\triangleleft a_{j,h})_l.
\end{align*}
We now construct $a_{r,h}$ by the method of characteristics and by induction as follows. Let $Z_h(t,s,x,\xi)$ be the flow associated with $V_h$, i.e.,
\[
\partial_t Z_h=-V_h(t,Z_h),\quad Z_h(s,s,x,\xi)=x.
\]
As $\psi(p_{\tilde{m},h})\in \mathcal{S}(-\infty)$ and using the same trick as in \cite[Lemma A.1]{dinh2016strichartz}, from \eqref{eq:S1} we infer
\begin{align}\label{eq:Z}
    \sup_{h\in (0,1]}|\partial_x^\alpha\partial_\xi^\beta(Z_h(t,s,x,\xi)-x)|\leq C_{\alpha\beta}|t-s|
\end{align}
for all $|t|,|s|\leq t_0$. Then by iteration, the solutions to \eqref{eq:a1} and \eqref{eq:ar} with initial data \eqref{eq:intial} are
\begin{align*}
    a_{0,h}(t,x,\xi)&=
    a(Z_h(0,t,x,\xi),\xi)\exp\left(\int_0^t f_h(s,Z_h(s,t,x,\xi),\xi)ds\right),\\
    a_{r,h}(t,x,\xi)&=\int_0^t g_{r,h}(s,Z_h(s,t,x,\xi),\xi)\exp\left(\int_\tau^t f_h(\tau,Z_h(\tau,t,x,\xi),\xi)d\tau\right)ds,
\end{align*}
for $r=1,\cdots,N-1$. 

Using the fact that $a,q_{k,h},f_h\in \mathcal{S}(-\infty)$, it is easy to see that $a_{0,h}\in \mathcal{S}(-\infty)$. Then $g_{1,h}\in  \mathcal{S}(-\infty)$ and $a_{1,h}\in  \mathcal{S}(-\infty)$. By iteration, we infer that $a_{r,h}\in  \mathcal{S}(-\infty)$ for any $r=1,\cdots,N-1$. On the other hand,  $\supp(a)\subset p_{0,0}^{-1}(\supp(\phi))$. According to \eqref{eq:Z}, this implies that, for $t_0>0$ small enough and for any $(x,\xi)\in p_{0,0}^{-1}(\supp(\phi))$, we have $(Z(t,s,x,\xi),\xi)\in p_{0,0}^{-1}(K)$ for all $|t|,|s|\leq t_0$. Thus, $a_{0,h}(t,x,\xi)=0$ for $(x,\xi)\not\in p_{0,0}^{-1}(\supp(\phi))$ since $\supp(g_{r,h}(t,\cdot,\cdot))\subset \cup_{0\leq j\leq r-1}\supp(a_{j,h})$. This shows that $\supp(a_{r,h}(t,\cdot,\cdot))\subset p_{0,0}^{-1}(K)$ uniformly w.r.t. $t\in [-t_0,t_0]$.

\medskip

{\bf Step 2: $L^2$-boundedness of the remainder.} The proof of the boundedness of the remainder is the same as in \cite[Step 2, Page 8819-8820]{dinh2016strichartz}. We use the notations therein and only need to point out that there exists $t_0>0$ small enough such that for all $t\in [t_0,t_0]$,
\begin{align*}
    \sup_{h\in(0,h_0]}\|\nabla_x\nabla_\xi S_h(t,x,\xi)-\mathbbm{1}_{\R^d}\|\ll 1,\quad\textrm{for all } x,\xi\in \R^d.
\end{align*}
As a result, for any $\alpha,\alpha',\beta\in \N^d$, there exists $C_{\alpha\alpha'\beta}>0$ such that
\begin{align*}
\sup_{h\in(0,h_0]}|\partial_x^\alpha\partial_y^{\alpha'}\partial_\xi^\beta(\Lambda^{-1}(t,x,y,\xi)-\xi) |\leq C_{\alpha\alpha'\beta}|t|
\end{align*}
for any $t\in [-t_0,t_0]$. Here $\Lambda$ is given by
\begin{align*}
    \Lambda(t,x,y,\xi)=\int_0^1 \nabla_x S(t,y+s(x-y),\xi)ds.
\end{align*}
Then by changing variable $\xi\mapsto \Lambda^{-1}(t,x,y,\xi)$, the action $J_h(S(t),r_{N+1})\circ J_h(S(t),r_{N+1})^*$ becomes a semiclassical pseudodifferential operator. Then the proof in \cite{dinh2016strichartz} gives the boundedness from $L^2(\R^d)$ to $L^2(\R^d)$.

Concerning the boundedness from $H^{-n}(\R^d)\to H^n(\R^d)$, according to \eqref{eq:RN}, we only need to point out that, for any $\alpha,\beta \in \N^d$ and $|\alpha|,|\beta|\leq n$, there exists a symbol $r_{N+1,\alpha,\beta}\in \mathcal{S}(-\infty)$ such that 
\begin{align*}
    \partial_x^\alpha R_N(t)\circ(\partial_x^\beta v_0)&= ih^{N+1} (2\pi h)^{-d} \partial_x\left(\iint_{\R^{2d}}e^{ih^{-1}(S_h(t,x,\xi)-y\cdot\xi)}r_{N+1,\alpha,\beta}(t,x,\xi)\partial_y^\beta v_0(y)dyd\xi\right)\\
    &= ih^{N+1-|\alpha|-|\beta|} (2\pi h)^{-d}\iint_{\R^{2d}}e^{ih^{-1}(S_h(t,x,\xi)-y\cdot\xi)}r_{N+1,\alpha,\beta}(t,x,\xi)v_0(y)dyd\xi 
\end{align*}
thanks to the fact that $r_{N+1}\in \mathcal{S}(-\infty)$ and Proposition \ref{prop:HJ}. Then, repeating the proof above by replacing $r_{N+1}$ by $r_{N+1,\alpha,\beta}$, we get \eqref{eq:est-RN}.

\medskip

{\bf Step 3: semiclassical dispersive estimates.}
\medskip

The kernel of $J_h(S_h(t),a_h(t))$ reads 
\begin{align}\label{eq:Kh}
    L_h(t,x,y)=(2\pi h)^{-d}\int_{\mathbb{R}^d}e^{ih^{-1}(S_h(t,x,\xi)-y\cdot \xi)}a_h(t,x,\xi)d\xi,
\end{align}
where $a_h(t)=\sum_{r=0}^{N-1}h^r a_{r,h}(t)$ and  $(a_h(t))_{t\in [-t_0,t_0]}$ is bounded in $\mathcal{S}(-\infty)$ satisfying $\supp(a_h(t,\cdot,\cdot))\in p_{0,0}^{-1}(K)$ for some small neighborhood $K$ of $\supp(\phi)$ not containing the origin uniformly w.r.t. $t\in [-t_0,t_0]$. 

It suffices to consider the case $t\geq 0$, as the case $t\leq 0$ can be dealt with in a similar way. If $0\leq t\leq h$ or $1+th^{-1}\leq 2$, as $S_h$ is compactly supported in $\xi$ and $a_h$ are uniformly bounded in $t,x,y$, we get  
\begin{align}\label{eq:t<h}
    |L_h(t,x,y)|\leq Ch^{-d}(1+th^{-1})^{-(d-1)/2}.
\end{align}
Now let us consider the case $h\leq t\leq t_0$. Set $\lambda:=th^{-1}\geq 1$. Then
\begin{align*}
    S_h(t,x,\xi)=x\cdot \xi+t\sqrt{g^{ij}\xi_i\xi_j+h^2\tilde{m}^2}+t^2\int_0^1(1-\theta)\partial_t^2S_h(\theta t,x,\xi)d\theta
\end{align*}
since $\psi(p_{\tilde{m},h})(x,\xi)=\sqrt{g^{j\ell}\xi_j\xi_\ell+h^2\tilde{m}^2}$ on $p_{0,0}^{-1}(K)$. 

Setting $p(x,\xi) = \xi^t G(x) \xi = \lvert \eta \rvert^2$ with $\eta = \sqrt{G(x)} \xi$ or $\xi = \sqrt{g(x)} \eta$, where $g(x) = \big (g_{j\ell} (x) \big )_{j\ell}$ and $G(x) = \big ( g(x) \big)^{-1} = \big (g^{j\ell} (x) \big )_{j\ell}$, the kernel $L_h$ can be written as 
\begin{align}\label{eq:ker}
    L_h(t,x,y) = (2\pi h)^{-d} \int_{\R^3} e^{i th^{-1}\Phi_h (t,x,y,\eta)} a_h(t,x,\sqrt{g(x)} \eta) \sqrt{g(x)} d\eta,
\end{align}
where $\sqrt{g(x)}=\sqrt{\det\,g(x)}$ and 
\begin{align*}
    \Phi_h(t,x,y,\eta)=\frac{\sqrt{g(x)}(x-y)\cdot \eta}{t}+\sqrt{|\eta|^2+h^2\tilde{m}^2}+t\int_0^1(1-\theta)\partial_t^2S_h(\theta t,x,\sqrt{g(x)}\eta)d\theta.
\end{align*}

Now, let us deal with the wave and Klein-Gordon-type dispersive estimates separately. 

\medskip

{\em Wave type dispersive estimates: proof of \eqref{eq:est-JN-W}.} Let us start with the case $\tilde{m}>0$.  The gradient of the phase $\Phi_h$ is 
\begin{align*}
    \nabla_{\eta} \Phi_h(t,x,y,\eta) = \frac{\sqrt{g(x)} (x-y) }{t} + \frac{\eta}{\sqrt {\lvert \eta \rvert^2 + h^2\tilde{m}^2}} + t\sqrt{g(x)}\int_0^1(1-\theta)(\nabla_\eta\partial_t^2S_h)(\theta t,x,\sqrt{g(x)}\eta)d\theta.
\end{align*}
If $|\sqrt{g(x)}(x-y)/t|\geq C$ for some constant $C$ large enough, we use the non-stationary phase method which gives for any $N>\frac{d-1}{2}$,
\begin{align}\label{eq:non-sta}
    |L_h(t,x,y)|\leq Ch^{-d}\lambda^{-N}\leq Ch^{-(d+1)/2}t^{-(d-1)/2}.
\end{align}

We now deal with the case $|\sqrt{g(x)}(x-y)/t|< C$ by using the stationary phase method. For any $|\eta_j|\geq \epsilon$ with some $\epsilon$ small but independent of $t$, we have 
\begin{align*}
    \nabla^2_{\eta^{(j)}} \Phi_h =\frac{1}{\sqrt {\lvert \eta \rvert^2 + h^2\tilde{m}^2}} \Bigg [\mathbbm{1}_{(d-1)\times (d-1)} - \frac{ \eta^{(j)} \otimes \eta^{(j)}}{ \lvert \eta \rvert^2 + h^2\tilde{m}^2 } \Bigg ]+\mathcal{O}(t)
\end{align*}
where $\eta^{(j)}=(\eta_1,\cdots,\eta_{j-1},\eta_{j+1},\cdots,\eta_{d-1})$. Then for any $j=1,\cdots,N$ and $t_0$ small enough, we have 
\begin{align}\label{eq:determinant}
    |\det \nabla^2_{\eta^{(j)}} \Phi_h|=(|\eta_j|^2+h^2\tilde{m}^2)(|\eta|^2+h^2\tilde{m}^2)^{(-d+1)/2}+\mathcal{O}(t)\geq C
\end{align}
independently of $h$. Let us now take a cover $\chi_{j}(x,\xi)\in C^\infty(\R^d\times \R^d)$ such that
\begin{align}
    \sum_{j=1}^d\chi_{j}(x,\eta)=1\quad\textrm{on } p_{0,0}^{-1}(K).
\end{align}
Notice that for any $\eta\in \supp(\chi_j)$ we have $|\eta_j|\geq \epsilon$. Let
\begin{align*}
    L_{j,h}(t,x,y,\eta_j)=(2\pi h)^{-d} \int_{\R^{d-1}} e^{i th^{-1}\Phi_h (t,x,y,\eta)}\chi_{j}(x,\eta) a(t,x,\sqrt{g(x)} \eta) \sqrt{g(x)} d\eta^{(j)}
\end{align*}
We need the following parameter-dependent stationary phase theorem as in \cite{kapitanski1989some}.
\begin{theorem}\label{th:stat}
Let $\Phi(x,y)$ be a real valued $C^\infty$ function in a neighborhood of $(x_0,y_0)\in\R^{n+m}$. Assume that $\nabla_x\Phi(x_0,y_0)=0$ and that $\nabla_x^2\Phi(x_0,y_0)$ is non-singular, with signature $\sigma$. Denote by $x(y)$ the solution to the equation $\nabla_x\Phi(x,y)=0$ with $x(y_0)=x_0$ given by the implicit function theorem. Then when $a\in C^\infty_0(K)$, $K$ close to $(x_0,y_0)$,
\begin{align*}
  \MoveEqLeft \left|\int e^{i\lambda \Phi(x,y)}a(x,y)dx -\lambda^{-n/2}e^{i\lambda \Phi(x(y),y)}|\det(\nabla_x^2\Phi(x(x),y))|^{-1/2}\times e^{\pi i\sigma/4} a(x(y),y)\right|\\
    &\qquad\qquad\qquad\qquad\qquad\qquad\qquad\qquad\qquad\qquad\qquad\leq C\lambda^{-1-\frac{d}{2}}\sum_{|\alpha|\leq 2}\sup_{x}|\partial_x^\alpha a(x,y)|.
\end{align*}
\end{theorem}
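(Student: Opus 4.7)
The plan is to reduce, via a parameter-dependent Morse lemma, to the case of a purely quadratic phase, then evaluate the resulting Fresnel integral explicitly with a carefully chosen Taylor expansion of the amplitude.

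First I would localize the $x$-integration with a smooth cutoff $\chi$ supported near $x(y_0)$: outside its support the gradient $\nabla_x\Phi(\cdot,y)$ is bounded away from zero uniformly for $y$ near $y_0$, so repeated integration by parts with the operator $L=(i\lambda|\nabla_x\Phi|^2)^{-1}\nabla_x\Phi\cdot\nabla_x$ gives an $O(\lambda^{-N})$ contribution that is absorbable in the claimed remainder. Inside $\supp\chi$, the parameter Morse lemma yields a $C^\infty$ family of diffeomorphisms $x=\psi(z,y)$ with $\psi(0,y)=x(y)$ such that
\[
\Phi(\psi(z,y),y)-\Phi(x(y),y)=\tfrac12\langle Q_0 z,z\rangle,
\]
where $Q_0=\mathrm{diag}(\epsilon_1,\dots,\epsilon_n)$ with $\epsilon_j\in\{\pm 1\}$ realizing the signature $\sigma$. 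After this change of variable the integral becomes $e^{i\lambda\Phi(x(y),y)}\int e^{i\lambda\langle Q_0 z,z\rangle/2}\tilde a(z,y)\,dz$, where the new amplitude satisfies $\tilde a(0,y)=a(x(y),y)|\det\nabla_x^2\Phi(x(y),y)|^{-1/2}$ (the Hessian-determinant factor arising from the Jacobian of $\psi$).

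Next I would Taylor-expand $\tilde a(z,y)=\tilde a(0,y)+z\cdot\nabla_z\tilde a(0,y)+R(z,y)$, with integral-form remainder giving $|R(z,y)|\leq C|z|^2\sum_{|\alpha|=2}\|\partial_z^\alpha\tilde a\|_\infty$. The constant term produces the standard Fresnel value $(2\pi/\lambda)^{n/2}e^{i\pi\sigma/4}\tilde a(0,y)$ up to $O(\lambda^{-\infty})$, obtained by completing the Gaussian and treating the cutoff tail by non-stationary phase; this is precisely the claimed leading term. The linear term vanishes modulo $O(\lambda^{-\infty})$ by the parity of the Gaussian $e^{i\lambda\langle Q_0 z,z\rangle/2}$ combined with the same tail argument.

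The main obstacle, and the step requiring the most care, is controlling the contribution of $R$ by $C\lambda^{-1-n/2}$ times only the $C^2$-norm of $a$, rather than higher derivatives. I would use the rescaling $z=\lambda^{-1/2}w$ to rewrite this contribution as $\lambda^{-n/2}\int e^{i\langle Q_0 w,w\rangle/2}R(\lambda^{-1/2}w,y)\,dw$, and exploit $|R(\lambda^{-1/2}w,y)|\leq C\lambda^{-1}|w|^2\|\tilde a\|_{C^2}$ on a ball of radius $\sim\lambda^{1/2}$ together with a non-stationary phase tail outside that ball, so the surplus $\lambda^{-1}$ factor comes out cleanly. Uniformity in $y$ follows for free since the Morse chart, the critical point $x(y)$, the Hessian and its determinant all depend smoothly on $y$ and the cutoffs can be chosen independent of $y$ in a small neighborhood of $y_0$.
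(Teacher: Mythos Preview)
The paper does not actually prove this statement; its entire proof is a one-line reference to Theorem~7.7.6 in H\"ormander's book. Your outline---localize, apply the parameter-dependent Morse lemma to reduce to a pure quadratic phase, evaluate the Fresnel integral for the constant term, kill the linear term by parity---is essentially H\"ormander's route, and those steps are fine.

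The gap is in your treatment of the quadratic remainder $R$. After rescaling you need $\int e^{i\langle Q_0 w,w\rangle/2}R(\lambda^{-1/2}w,y)\,dw=O(\lambda^{-1})$ with only $C^2$ control on $\tilde a$. You invoke the pointwise bound $|R(\lambda^{-1/2}w)|\le C\lambda^{-1}|w|^2\|\tilde a\|_{C^2}$ on the ball $|w|\lesssim\lambda^{1/2}$, but that ball is the \emph{full} support of the integrand, and integrating the bound in absolute value there gives $\lambda^{-1}\int_{|w|\lesssim\lambda^{1/2}}|w|^2\,dw\sim\lambda^{n/2}$: the $|w|^2$ growth exactly cancels the $\lambda^{-1}$ gain, so nothing ``comes out cleanly''. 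Recovering the missing decay requires the oscillation of $e^{i\langle Q_0w,w\rangle/2}$, but each integration by parts in $w$ costs a derivative of $R(\lambda^{-1/2}w)$, and with $\tilde a\in C^2$ you can afford at most two---which brings the integrand only to $O(\lambda^{-1}|w|^{-2})$, integrable over $\{|w|\gtrsim 1\}$ only when $n=1$. For $n\ge 2$ a logarithmic or polynomial loss remains. Obtaining the sharp $C^2$-only remainder for general $n$ is exactly the nontrivial content of H\"ormander's argument and does not follow from a bare rescaling plus pointwise Taylor bound. (In the paper's application the amplitude is $C^\infty$ anyway, so if you are willing to allow a few more derivatives of $a$ on the right-hand side, your scheme does go through.)
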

\begin{proof}
See Theorem 7.7.6 in \cite{hormander2015analysis}.
\end{proof}
Applying this stationary phase theorem and choosing $x=\eta^{(j)},\;y=\eta_j$, we have
\begin{align}\label{eq:disper}
|L_h(t,x,y)|&\leq \sum_{j=1}^d |\int_{\R}L_{j,h}(t,x,y,\eta_j)d\eta_j|\leq C\sum_{j=1}^d\|L_{j,h}(t,x,y,\cdot)\|_{L^\infty(\R)}\\
&\leq Ch^{-d}\lambda^{-(d-1)/2}=Ch^{-(d+1)/2}t^{-(d-1)/2}.
\end{align}
Recall that $\lambda=th^{-1}$ for $h\leq t \leq t_0$. Combining \eqref{eq:t<h}, \eqref{eq:non-sta} and \eqref{eq:disper}, we conclude that
\begin{align*}
    |L_h(t,x,y)|\leq h^{-d}(1+th^{-1})^{-(d-1)/2}.
\end{align*}

If we take $\tilde{m}=0$, as estimate \eqref{eq:determinant} still holds, the proof works in the same way.

\medskip

{\em Klein-Gordon type dispersive estimates: proof of \eqref{eq:est-JN-KG}} Arguing as above for the wave one, we only need to consider the case $|t^{-1}\sqrt{g(x)}(x-y)|\leq C$. Unfortunately, in this case
\begin{align*}
    \nabla_{\eta}^2 \Phi_{h} = \frac{1}{\sqrt {\lvert \eta \rvert^2 + h^2\tilde{m}^2}} \Bigg [\mathbbm{1}_{d\times d} - \frac{ \eta \otimes \eta}{ \lvert \eta \rvert^2 + h^2\tilde{m}^2 } \Bigg ]+\mathcal{O}(t)
\end{align*}
from which we infer that
\begin{align*}
    |\det \nabla^2_{\eta} \Phi_{h}|=h^2\tilde{m}^2\big ( \lvert \eta \rvert^2 + h^2\tilde{m}^2)^{- \frac{d}{2}}+\mathcal{O}(t)\geq Ch^2\tilde{m}^2+\mathcal{O}(t).
\end{align*}
Notice now that, differently from \eqref{eq:determinant},  we may not be able to control the above term from below for $t\in [h,t_0]$ when $h$ is small enough. To overcome this problem, we split the phase term $\Phi_h$ into two parts:
\begin{align*}
    \Phi_h= \widetilde{\Phi}_{h}(t,x,y,\eta)+t\int_0^1(1-\theta)\partial_t^2S_h(\theta t,x,\sqrt{g(x)}\eta)d\theta
\end{align*}
where
\begin{align}\label{eq:phih-t}
    \widetilde{\Phi}_{h}(t,x,y,\eta)&=t^{-1}\sqrt{g(x)}(x-y)\cdot \eta+\sqrt{|\eta|^2+h^2\tilde{m}^2}.
\end{align}
Let
\begin{align*}
    \widetilde{a}_h=e^{ih^{-1}t^2\int_0^1(1-\theta)\partial_t^2S_h(\theta t,x,\sqrt{g(x)}\eta)d\theta}a_h(t,x,\sqrt{g(x)}\eta)\sqrt{g(x)},
\end{align*}
then we can write 
\begin{align}
    L_h(t,x,y) = (2\pi h)^{-d} \int_{\R^d} e^{i \lambda\widetilde{\Phi}_{h} (t,x,y,\eta)} \widetilde{a}_h(t,x,\sqrt{g(x)} \eta) \sqrt{g(x)} d\eta.
\end{align}
Then we turn to study this new oscillatory integral problem for any $t\in [0,h^{1/2}t_0]$. The advantage is that for $t\in [0,h^{1/2}t_0]$,
\begin{align*}
    |\partial_\eta^\alpha \widetilde{a}_h|\leq C_\alpha (h^{-1}t^2)^{|\alpha|}\leq C_\alpha'
\end{align*}
independently of $h$. So we only consider the interval $t\in [h,h^{1/2}t_0]$. 

We can also write $L_{j,h}$ as
\begin{align*}
    L_{j,h}(t,x,y,\eta_j)=(2\pi h)^{-d} \int_{\R^{d-1}} e^{i \lambda\widetilde{\Phi}_{h} (t,x,y,\eta)} \chi_{j}(x,\eta)\widetilde{a}_h(t,x,\sqrt{g(x)} \eta) \sqrt{g(x)} d\eta^{(j)}.
\end{align*}
As explained in Remark \ref{re:stat}, applying Theorem \ref{th:stat} as for the wave dispersive case we infer that
 \begin{align}\label{eq:osci-inte}
  \MoveEqLeft \int_{\R}L_{j,h}(t,x,y,\eta_j) d\eta_j=h^{-d}\lambda^{-\frac{d-1}{2}} \int_{\R} e^{i\lambda F_h(\eta_j)}A_h(t,x,\eta_j)d\eta_j+\mathcal{O}(h^{-d}\lambda^{-\frac{d+1}{2}}).
\end{align}
where
\begin{align*}
    F_h(\eta_j)=\widetilde{\Phi}_{h}(\zeta(\eta_j),\eta_j),\quad A_h(t,x,\eta_j):=\frac{e^{\pi i\sigma/4} \chi_j(\zeta(\eta_j),\eta_j) \widetilde{a}_h(\zeta(\eta_j),\eta_j)}{|\det(\nabla_x^2\widetilde{\Phi}_{h} (\zeta(\eta_j),\eta_j))|^{1/2}}
\end{align*}
and, given by implicit function theorem, $\zeta(\eta_j)$ is the solution to the equation
\begin{align}\label{eq:zeta}
    \nabla_{\eta^{(j)}}\widetilde{\Phi}_{h} (\zeta,\eta_j)=0\quad \textrm{with}\quad \zeta(\eta_{j,0})=\eta^{(j)}_0
\end{align}
with the point $(\eta^{(j)}_0,\eta_{j,0})\in \R^{d-1}\times \R$ satisfying $\nabla_{\eta^{(j)}}\widetilde{\Phi}_{h} (\eta^{(j)}_0,\eta_{j,0})=0$. Furthermore, by implicit function theorem, we know that $\zeta$ is smooth and satisfies
\begin{align}\label{eq:zeta'}
    \zeta'(\eta_{j})=-[\nabla_{\eta^{(j)}}^2\widetilde{\Phi}_{h} (\eta^{(j)},\eta_{j})]^{-1} [\partial_{\eta_{j}}\nabla_{\eta^{(j)}}\widetilde{\Phi}_{h} (\eta^{(j)},\eta_{j})].
\end{align}
Now we are going to study \eqref{eq:osci-inte} by using the following Van der Corput lemma, see \cite{stein1970singular}.
\begin{lemma}[Van der Corput]\label{lem:vandercorput}
Let $\phi$ be a real-valued smooth function in $(a,b)$ such that $|\phi^{(k)}(x)|\geq c_{k}$ for some integer $k\geq 1$ and all $x\in (a,b)$. Then
\begin{align*}
    \left|\int_{a}^b e^{i\lambda\phi}\psi(x)dx\right|\leq C(c_k\lambda)^{-1/k}\left(|\psi(b)|+\int_a^b|\psi'|dx\right)
\end{align*}
holds when (i) $k\ge 2$ or (ii) $k=1$ and $\phi'(x)$ is monotone. 
\end{lemma}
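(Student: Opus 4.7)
The plan is to prove Case (ii) directly by a single integration by parts, then derive Case (i) by induction on $k \ge 2$ using (ii) as the base step and splitting the integration interval around the unique zero of $\phi^{(k-1)}$ at each stage.

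For \emph{Case (ii)} ($k=1$, $\phi'$ monotone), the argument is one integration by parts via $e^{i\lambda\phi}=(i\lambda\phi')^{-1}(e^{i\lambda\phi})'$, giving
\begin{align*}
\int_a^b e^{i\lambda\phi}\psi\,dx = \frac{1}{i\lambda}\left[\frac{\psi\,e^{i\lambda\phi}}{\phi'}\right]_a^b -\frac{1}{i\lambda}\int_a^b e^{i\lambda\phi}\left(\frac{\psi}{\phi'}\right)'\,dx.
\end{align*}
The key point is that since $|\phi'|\ge c_1$ and $\phi'$ is monotone, the function $1/\phi'$ is also monotone with total variation $|1/\phi'(b)-1/\phi'(a)|\le 2/c_1$. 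This controls $\int|\psi\,(1/\phi')'|\,dx$ by $(2/c_1)\|\psi\|_\infty$, while the boundary terms and $\int|\psi'/\phi'|\,dx$ are immediately bounded using $|\phi'|^{-1}\le c_1^{-1}$. Absorbing $|\psi(a)|$ and $\|\psi\|_\infty$ into $|\psi(b)|+\int_a^b|\psi'|\,dx$ by the fundamental theorem of calculus gives the claim for $k=1$.

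For \emph{Case (i)} with $k\ge 2$, I would argue by induction. Since $\phi^{(k)}$ is continuous and $|\phi^{(k)}|\ge c_k>0$, it has constant sign, so $\phi^{(k-1)}$ is strictly monotone and vanishes at most at one point $x_0\in[a,b]$ (take $x_0$ to be an endpoint if no interior zero exists). For a parameter $\delta>0$ to be optimized, split $(a,b)=I_\delta\cup I_\delta^c$ with $I_\delta=(x_0-\delta,x_0+\delta)\cap(a,b)$. On $I_\delta$ the trivial bound contributes at most $2\delta(|\psi(b)|+\int_a^b|\psi'|)$. On each of the at most two components of $I_\delta^c$, the mean value theorem yields $|\phi^{(k-1)}|\ge c_k\delta$, and the inductive hypothesis applies at level $k-1$ with constant $c_k\delta$: one invokes Case (i) when $k-1\ge 2$, and Case (ii) when $k=2$, noting that $\phi'$ is automatically monotone because $\phi''$ has constant sign. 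Summing the two contributions and optimizing
\begin{align*}
2\delta+C(c_k\delta\lambda)^{-1/(k-1)}
\end{align*}
in $\delta$ yields $\delta\sim(c_k\lambda)^{-1/k}$ and hence the desired decay rate $(c_k\lambda)^{-1/k}$.

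The main obstacle is the bookkeeping of the boundary values generated by the inductive step on the subintervals of $I_\delta^c$: each application of the inductive hypothesis produces boundary terms at $x_0\pm\delta$ of the form $|\psi(x_0\pm\delta)|$, which must be absorbed back into $|\psi(b)|+\int_a^b|\psi'|$ uniformly in $\delta$ via $|\psi(x)|\le |\psi(b)|+\int_x^b|\psi'|$, so that the structure of the right-hand side is preserved across the induction. Apart from this routine but delicate accounting, the proof is mechanical.
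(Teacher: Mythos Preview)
Your argument is the classical proof of the Van der Corput lemma and is correct. The paper does not actually prove this lemma: it merely states it with a reference to Stein's book (\texttt{stein1970singular}), so there is nothing to compare against beyond noting that your sketch is precisely the standard argument found in that reference (integration by parts for $k=1$, then induction via the near/far splitting around the unique zero of $\phi^{(k-1)}$ for $k\ge 2$).
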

To apply this lemma to \eqref{eq:osci-inte}, we are going to verify that $|F''(\eta_j)|\geq C$ on $(\zeta(\eta_j),\eta_j)\in \supp(\chi_j)$. Using \eqref{eq:zeta} and \eqref{eq:zeta'}, we know that
\begin{align}\label{eq:F''}
    F''(\eta_j)&=\nabla_{\eta^{(j)}}^2\widetilde{\Phi}_{h} (\zeta(\eta_j),\eta_j)\zeta'(\eta_j)\cdot \zeta'(\eta_j)+2\nabla_{\eta^{(j)}}\partial_{\eta_j}\widetilde{\Phi}_{h} (\zeta(\eta_j),\eta_j)\zeta'(\eta_j)+\partial_{\eta_j}^2\widetilde{\Phi}_{h} (\zeta(\eta_j),\eta_j)\notag\\
    &=-\nabla_{\eta^{(j)}}\partial_{\eta_j}\widetilde{\Phi}_{h} (\zeta(\eta_j),\eta_j)[\nabla_{\eta^{(j)}}^2\widetilde{\Phi}_{h} (\eta^{(j)},\eta_{j})]^{-1} \partial_{\eta_{j}}\nabla_{\eta^{(j)}}\widetilde{\Phi}_{h} (\eta^{(j)},\eta_{j})+\partial_{\eta_j}^2\widetilde{\Phi}_{h} (\zeta(\eta_j),\eta_j).
\end{align}
Notice that 
\begin{align*}
    \nabla_{\eta^{(j)}}\partial_{\eta_j}\widetilde{\Phi}_{h}=-\frac{\eta_j}{(\lvert \eta \rvert^2 + h^2\tilde{m}^2)^{3/2}}\eta^{(j)},
\end{align*}
and $\nabla_{\eta^{(j)}}\partial_{\eta_j}\widetilde{\Phi}_{h}$ is an eigenvector of $\nabla_{\eta^{(j)}}^2\widetilde{\Phi}_{h} (\eta^{(j)},\eta_{j})$. More precisely,
\begin{align*}
    \nabla_{\eta^{(j)}}^2\widetilde{\Phi}_{h} (\eta^{(j)},\eta_{j}) \nabla_{\eta^{(j)}}\partial_{\eta_j}\widetilde{\Phi}_{h}= \frac{h^2\tilde{m}^2+|\eta_j|^2}{(\lvert\eta\rvert^2 + h^2\tilde{m}^2)^{3/2}}  \nabla_{\eta^{(j)}}\partial_{\eta_j}\widetilde{\Phi}_{h}.
\end{align*}
Thus,
\begin{align*}
    F''(\eta_j)= \frac{1}{(\lvert\zeta\rvert^2+\lvert\eta_j\rvert^2 + h^2\tilde{m}^2)^{3/2}}(|\zeta|^2+h^2\tilde{m}^2-\frac{|\eta_j|^2|\zeta|^2}{|\eta_j|^2+h^2\tilde{m}^2})\geq Ch^2\tilde{m}^2
\end{align*}
for $(\zeta(\eta_j),\eta_j)\in \supp(\chi_j)$. Using now Lemma \ref{lem:vandercorput} with $k=2$ into \eqref{eq:osci-inte} yields 
\begin{align}\label{eq:disper-KG'}
    |L_h(t,x,y)|&\leq \sum_{j=1}^d |\int_{\R}L_{j,h}(t,x,y,\eta_l)d\eta_j|\notag\\
    &\leq Ch^{-d}\lambda^{-(d-1)/2}(h^2\lambda)^{-\frac{1}{2}}+Ch^{-d}\lambda^{-\frac{d+1}{2}}\leq Ch^{-d/2-1}t^{-d/2}
\end{align}
for any $t\in [h,h^{1/2}t_0]$ and $(x,\eta)\in p_{0,0}^{-1}(K)$. Gathering together \eqref{eq:t<h} and \eqref{eq:disper-KG'}, we conclude that for any $t\in h^{\frac{1}{2}}[-t_0,t_0]$ with $t_0$ small enough,
\begin{align}
    |L_h|\leq Ch^{-d-1}(1+th^{-1})^{-d/2}.
\end{align}
This concludes the proof.

\end{proof}

\subsection{Conclusion of the proof of Proposition \ref{prop:disper}}
We are finally in position to prove Proposition \ref{prop:disper}. We need this additional result:
\begin{lemma}\label{lem:finite-propagation}
Let $\chi^{(1)},\chi^{(2)}\in C^\infty_0(\R^d)$ such that $\chi^{(2)}=1$ near $\supp(\chi^{(1)})$. Let $K$, $a_{j,h}(t,\cdot,\cdot)\in \mathcal{S}(-\infty)$, $S_h\in C^\infty([-t_0,t_0]\times \R^{2d})$ and $J_{h}$ be given as in Lemma \ref{th:JN}. Then for $t_0>0$ small enough,
\begin{align*}
    J_h(S_h(t),a_h(t,x))\chi^{(1)}=\chi^{(2)}J_h(S_h(t),a_h(t))\chi^{(1)}+\tilde{R}(t)
\end{align*}
where $\tilde{R}(t)=\mathcal{O}_{H^{-n}(\R^d)\to H^n\R^d)}(h^\infty)$.
\end{lemma}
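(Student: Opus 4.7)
The plan is a standard non-stationary phase argument exploiting the near-identity nature of the canonical transformation generated by $S_h$ for small times. Write the kernel of $J_h(S_h(t),a_h(t))$ as
\begin{equation*}
K_h(t,x,y) = (2\pi h)^{-d}\int_{\R^d} e^{ih^{-1}(S_h(t,x,\xi)-y\cdot \xi)}\,a_h(t,x,\xi)\,d\xi,
\end{equation*}
so that the kernel of $\tilde R(t)=(1-\chi^{(2)}(x))\,J_h(S_h(t),a_h(t))\,\chi^{(1)}(y)$ is $(1-\chi^{(2)}(x))K_h(t,x,y)\chi^{(1)}(y)$. Since $\chi^{(2)}\equiv 1$ near $\supp(\chi^{(1)})$, there exists $c_0>0$ such that $|x-y|\ge c_0$ on $\supp(1-\chi^{(2)})(x)\times \supp(\chi^{(1)})(y)$.

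The key geometric input is that $\nabla_\xi S_h(t,x,\xi)$ is a small perturbation of $x$. Indeed, from \eqref{eq:S2} and the smoothness and compact $\xi$-support of $a_h$, we have on $\supp(a_h)$
\begin{equation*}
\bigl|\nabla_\xi S_h(t,x,\xi)-x\bigr|
= \bigl|t\,\nabla_\xi \psi(p_{\tilde m,h})(x,\xi)+\mathcal O(t^2)\bigr| \le C_1|t|
\end{equation*}
uniformly in $h\in(0,1]$, by \eqref{eq:S1}. Consequently, after shrinking $t_0$ so that $C_1 t_0 \le c_0/2$, on the support of the integrand defining $\tilde R(t)$ we have
\begin{equation*}
\bigl|\nabla_\xi\bigl(S_h(t,x,\xi)-y\cdot\xi\bigr)\bigr| = |\nabla_\xi S_h(t,x,\xi)-y| \ge |x-y|-C_1|t|\ge \tfrac{c_0}{2}.
\end{equation*}

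With this lower bound, the phase is non-stationary on the relevant region and we can integrate by parts with the transposed operator associated to
\begin{equation*}
L := \frac{h}{i}\,\frac{\nabla_\xi S_h(t,x,\xi)-y}{|\nabla_\xi S_h(t,x,\xi)-y|^2}\cdot \nabla_\xi,\qquad L\,e^{ih^{-1}(S_h-y\cdot\xi)}=e^{ih^{-1}(S_h-y\cdot\xi)}.
\end{equation*}
Each application of ${}^t L$ produces a factor of $h$ together with derivatives falling on $a_h$ and on the vector field built from $\nabla_\xi S_h$, the latter remaining bounded uniformly in $h\in(0,1]$ thanks to \eqref{eq:S1}. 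Since $a_h(t,\cdot,\cdot)\in \mathcal S(-\infty)$ with compact $\xi$-support, after $N$ integrations by parts we obtain, for every $M,N\ge 0$,
\begin{equation*}
\bigl|\partial_x^{\alpha}\partial_y^{\beta}\bigl[(1-\chi^{(2)}(x))K_h(t,x,y)\chi^{(1)}(y)\bigr]\bigr|\le C_{\alpha,\beta,N,M}\,h^{N}\,\langle x-y\rangle^{-M},
\end{equation*}
uniformly in $t\in[-t_0,t_0]$ and $h\in(0,1]$. Derivatives in $x$ hitting $(1-\chi^{(2)}(x))$ or the phase produce only polynomially-controlled factors, and derivatives in $y$ produce powers of $h^{-1}\xi$ that are absorbed by further integrations by parts.

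The main (and essentially only) technical point is to ensure that the integration-by-parts procedure is uniform in $h\in(0,1]$; this is guaranteed by the $h$-uniform estimate \eqref{eq:S1} on $S_h-x\cdot\xi$ together with the $h$-uniform control of $(a_h(t))_{t,h}$ in $\mathcal S(-\infty)$. Once the pointwise bound above is established, a Schur-test argument (using the fast decay in $\langle x-y\rangle$) yields
\begin{equation*}
\|\tilde R(t)\|_{H^{-n}(\R^d)\to H^n(\R^d)}\le C_{n,N}\,h^{N}\qquad \text{for every } n,N\ge 0,
\end{equation*}
which is the desired $\mathcal O_{H^{-n}\to H^n}(h^\infty)$ bound.
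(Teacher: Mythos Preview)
Your argument is correct and is precisely the standard non-stationary phase plus Schur-test proof that the paper defers to \cite[Lemma 3.6]{dinh2016strichartz}: you use \eqref{eq:S2} to see that $\nabla_\xi S_h(t,x,\xi)=x+\mathcal O(t)$ on $\supp(a_h)$, hence $|\nabla_\xi(S_h-y\cdot\xi)|\gtrsim |x-y|$ on the disjoint supports, then integrate by parts and apply Schur's lemma. One small cosmetic point: in your displayed kernel bound the exponents $N$ and $M$ are not independent after $N$ integrations by parts (and the prefactor $(2\pi h)^{-d}$ together with the $\partial_x,\partial_y$ derivatives costs powers of $h$), but since $|x-y|\ge c_0$ and the number of integrations is arbitrary this is harmless and your conclusion stands.
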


\begin{proof}
The proof follows the one of \cite[Lemma 3.6]{dinh2016strichartz}; we omit the details.
\end{proof}

We now turn to the
\begin{proof}[Proof of Proposition \ref{prop:disper}]
Let $J_N^\kappa(t)=\kappa^* J_N(t)\kappa_*$, $R_{3,\kappa,N}=\kappa^* R_{N} \kappa_*$ with $J_N$ and $R_N$ being given by Lemma \ref{th:JN}.

Notice that
\begin{align*}
    \frac{d}{ds}\left(e^{-ish^{-1}\psi(h^2P_m)}\chi^{(2)}_\kappa J_N^\kappa(s)\chi^{(1)}_\kappa\right)= - ih^{-1}e^{-ish^{-1}\psi(h^2P_m)}(ih\partial_s +\psi(h^2P_m))\chi^{(2)}_\kappa J_N^\kappa(s)\chi^{(1)}_\kappa,
\end{align*}
and $J_N^\kappa(0)=\Opk(a_\kappa)$. Integrating the above equation over $[0,t]$, we infer
\begin{align}\label{eq:duhamel}
 \MoveEqLeft   e^{ith^{-1}\psi(h^2P_m)}\chi^{(2)}_\kappa\Opk(a_\kappa) \chi^{(1)}_\kappa u_0=\chi^{(2)}_\kappa J_N^\kappa(t)\chi^{(1)}_\kappa u_0 + \notag\\
    &\qquad\qquad +ih^{-1}\int_0^t e^{i(t-s)h^{-1}\psi(h^2P_m)}(ih\partial_s + \psi(h^2P_m))\chi^{(2)}_\kappa J_N^\kappa(s)\chi^{(1)}_\kappa u_0\;ds.
\end{align}
We now consider the terms inside the integral for the above formula. From \eqref{eq:psi-dec}, we infer
\begin{align*}
  \MoveEqLeft  (ih\partial_s + \psi(h^2P_m))\chi^{(2)}_\kappa J_N^\kappa(s)\chi^{(1)}_\kappa\\
    &= \textcolor{green}{i} h \chi^{(2)}_\kappa  \textcolor{green}{\partial}_s J_N^\kappa(s)\chi^{(1)}_\kappa + \chi^{(3)}_\kappa \Opk(q^\kappa(h))\chi^{(2)}_\kappa J_N^\kappa(s)\chi^{(1)}_\kappa + h^N R_{2,\kappa,N}(t)\chi^{(2)}_\kappa J_N^\kappa(s)\chi^{(1)}_\kappa.
\end{align*}
Then using Lemma \ref{th:JN} and Lemma \ref{lem:finite-propagation},
\begin{align*}
  \MoveEqLeft  (ih\partial_s + \psi(h^2P_m))\chi^{(2)}_\kappa J_N^\kappa(s)\chi^{(1)}_\kappa \\
    &= \chi^{(3)}_\kappa\kappa^*(ih\partial_s + \Op(q^\kappa(h)))J_N(s)\kappa_* \chi^{(1)}_\kappa +  R_{4,\kappa,N}(s) +  h^N R_{2,\kappa,N}(t)\chi^{(2)}_\kappa J_N^\kappa(s)\chi^{(1)}_\kappa\\
    &= - \chi^{(3)}_\kappa R_{3,\kappa,N}(s) \chi^{(1)}_\kappa + R_{4,\kappa,N}(s) +  h^N R_{2,\kappa,N}(t)\chi^{(2)}_\kappa J_N^\kappa(s)\chi^{(1)}_\kappa
\end{align*}
where $R_{4,\kappa,N}(s)=\mathcal{O}_{H^{-n}(\M)\to H^{n}(\M)}(h^{\infty})$. Thus \eqref{eq:phi-dec}, \eqref{eq:duhamel} and this give
\begin{align}\label{eq:u-JN}
    e^{ih^{-1}t\psi(h^2P_m)}\phi(-h^2\Delta_g)\chi_\kappa u_0= \tilde{\chi}_\kappa J_N^\kappa(t)\chi_\kappa u_0+R_{\kappa,N} u_0
\end{align}
with
\begin{align*}
\MoveEqLeft   R_{\kappa,N}:= h^N e^{ih^{-1}t\psi(h^2P_m)} R_{1,\kappa,N}(h)\\
   &-ih^{-1}\int_0^t e^{i(t-s)h^{-1}\psi(h^2P_m)}\left( \chi^{(3)}_\kappa R_{3,\kappa,N}(s) \chi^{(1)}_\kappa - R_{4,\kappa,N}(s)- h^N R_{2,\kappa,N}(t)\chi^{(2)}_\kappa J_N^\kappa(s)\chi^{(1)}_\kappa\right)\;ds.
\end{align*}
It follows from the Sobolev inequality and the fact $R_{j,\kappa,N}= \mathcal{O}_{H^{-n}(\M)\to H^n(\M)}(h^{N-2n})$ for any $n\leq \frac{N}{2}$ that
\begin{align*}
    \|R_{\kappa,N}\|_{L^\infty(\M)}\leq C\|R_{\kappa,N}\|_{H^d(\M)}\leq Ch^{N-2d-1}\|u_0\|_{H^{-d}(\M)} \leq C h^{N-2d-1}\|u_0\|_{L^1(\M)}.
\end{align*}
Taking $N$ large enough, we infer that for any $t\in [-t_0,t_0]$,
\begin{align*}
     \|R_{\kappa,N}\|_{L^\infty(\M)}\leq Ch^{-d}(1+|t|h^{-1})^{-\frac{d}{2}}.
\end{align*}
From \eqref{eq:u-JN}, Lemma \ref{th:JN} and this, we obtain \eqref{eq:disper-W} and \eqref{eq:disper-KG}. This completes the proof.
\end{proof}

\section{Dirac equation}\label{eq:diracsec}

In this section, we show how to deduce Strichartz estimates for the Dirac flow from estimates of Theorem \ref{th:Stri-half}.

\subsection{The Dirac equation on curved spaces}\label{introdir}
We begin with a brief overview of the construction of the Dirac equation in a non-flat (or non-Lorentzian) setting; we shall refer to \cite{camporesi} for further details (see also Section 5.6 in \cite{parktoms} and Section 2 in \cite{Weakdis}). For any $d\geq2$ let us consider a $(d+1)$-dimensional manifold in the form $\mathbb{R}_t\times \mathcal{M}$ with $(\M,g)$ a compact Riemannian manifold of dimension $d$ endowed with a spin structure; then, the Dirac operator on $\mathcal{M}$ can be written as
\begin{align}
    \D_m=-i\gamma^ae^i_{\; a}D_i-m
\end{align}
with $m\geq0$ is the mass and $\gamma^j$, $j=1,\dots,d$ is a set of matrices that satisfy the condition
$$
\gamma^i\gamma^j+\gamma^j\gamma^i=2\delta^{ij},\qquad i,j=1,\dots d.
$$
There are few different possible choices for the $\gamma$ matrices; notice anyway  that the explicit choice of the basis will play no role in our argument. Following \cite{camporesi}, let us define these matrices recursively as follows (in computations below, the index $d$ will be added to the $\gamma$ matrices in order to keep track of the dimensions):
\begin{itemize}
\item {\em Case $d=2$.} We set
\begin{equation*}
\gamma_2^1=\left(\begin{array}{cc}0 &
i \\-i & 0\end{array}\right),\quad \gamma_2^2=\left(\begin{array}{cc}0 &
1 \\1 & 0\end{array}\right).
\end{equation*}
\item {\em Case $d=3$.} We set 
\begin{equation*}
\gamma_3^1=\gamma_2^1,\quad \gamma_3^2=\gamma_2^2,\quad \gamma_3^3=(-i)\gamma^1_2\gamma^2_2=\left(\begin{array}{cc}1 & 0 \\0 & -1\end{array}\right).
\end{equation*}
\item {\em Case $d>3$ even.} We set
\begin{equation*}
\gamma_d^j=\left(\begin{array}{cc}0 & i\gamma_{d-1}^j \\-i \gamma_{d-1}^j & 0\end{array}\right),\quad j=1,\dots,d-1,\qquad \gamma_d^d=\left(\begin{array}{cc}0 & I_{2^{\frac{d-2}2}} \\I_{2^{\frac{d-2}2}} & 0\end{array}\right).
\end{equation*}
\item {\em Case $d>3$ odd.} We set
\begin{equation*}
\gamma_d^j=\gamma_{d-1}^j,\quad j=1,\dots,d-1,\qquad\gamma_d^d=i^{\frac{d-1}2}\gamma^1_{d-1}\cdot\dots\cdot\gamma^{d-1}_{d-1}=i^{\frac{d-1}2}\left(\begin{array}{cc}I_{2^{\frac{d-3}2}} & 0 \\0 &- I_{2^{\frac{d-{3} }2}}\end{array}\right).
\end{equation*}
\end{itemize}

\medskip
The matrix bundle $e^i_{\; a}$ is called {\em $n$-bein} and it is defined as follows
\begin{equation}\label{dre}
    g^{ij} = e^i_{\; a}\delta^{ab}e^j_{\; b}
\end{equation}
where $\delta$ is the Kronecker symbol, and in fact it connects the ``spatial'' metrics to the Euclidean one.
Finally, the covariant derivative for spinors $D_i$ is defined by
\begin{align}\label{conneq}
    D_0=\partial_0,\quad D_j=\partial_j+B_j,\quad j=1,2,\dots, d
\end{align}
where $B_j$ writes
\[
B_j = \frac{1}{8} [\gamma^a,\gamma^b] \omega_j^{\; ab}
\]
and $\omega_j^{\; ab}$, called the {\em spin connection}, is given by
\begin{equation}\label{spincon}
    \omega_j^{\;ab}=e^i_{\; a}\partial_j e^{ib}+e_i^{\;a}\Gamma_{\; jk}^ie^{kb}
\end{equation}
with the Christoffel symbol (or affine connection) $\Gamma^i_{\; jk}$
\begin{equation}\label{ichtus}
\Gamma_{\; jk}^i:=\frac{1}{2}g^{il}(\partial_j g_{lk}+\partial_kg_{jl}-\partial_l g_{jk}).
\end{equation}
We stress the fact that in the rest of this section we shall abuse notation by calling functions what should be more precisely called spinors.

\subsection{Strichartz estimates for the Dirac equation: proof of Theorem \ref{th:Stri-Dirac}}\label{subsec:dirsquare}

 We are now in a position to prove Strichartz estimates for the solutions to the Dirac equation \eqref{eq:Dirac}, deducing them from the ones for the Klein-Gordon that we have proved in Section 1. The starting point is the following explicit formula, that has been proved in \cite{Weakdis}: 
 \begin{equation}\label{eq:Dsquare}
 \D^2:=m^2+\frac{1}{4}\mathcal{R}_g-\Delta^{\mathcal{S}}=-{\Delta}_g+ B^i\partial_i+\widetilde{D}^iB_i + B^iB_i +\frac{1}{4}\mathcal{R}_g+m^2
 \end{equation}
 where the spinorial laplacian $\Delta^{\mathcal{S}}=D^jD_j$, $\widetilde{D}^i\Psi_k=\partial^i\Psi_k-\Gamma^{l\; i}_{\; k}\Psi_l$, $B^i=h^{ij}B_j$ and $\mathcal{R}_g$ denotes the scalar curvature on $(\M,g)$.
As a consequence, the solution $u$ to the Dirac equation can be written as follows:
\begin{align}\label{repsol}
    u(t,x):=e^{it\D_m}u_0=\dot{W}_m(t)u_0+iW_m(t)\D_m u_0+\int_0^t W_m(t-s)(\Omega_1(u)(s)+\Omega_2 u(s)) ds
\end{align}
where 
\[
W_m(t)=\frac{\sin(t\sqrt{m^2-\Delta}_g)}{\sqrt{m^2-\Delta}_g},\quad \dot{W}_m=\partial_t W_m
\]
and 
\begin{equation}\label{Omega}
\Omega_1(u):=2B^i\partial_i u,\quad \Omega_2:=-\partial^iB_i+B^iB_i-\Gamma_{\; i}^{j\; i}B_j-\frac14 \mathcal{R}_g.
\end{equation}
Notice that as the manifold is $\M$ is assumed to be smooth, the terms $B_i$, $\Gamma^{ji}_i$ and $\mathcal{R}_g$ are smooth.

We first consider the case $m>0$. We set $\widetilde{\gamma}_{pq}:=\gamma_{pq}^{\rm W}$ for wave admissible pair $(p,q)$, and $\widetilde{\gamma}_{pq}:=\gamma_{pq}^{\rm KG}$  for Schr\"odinger admissible pair $(p,q)$. Using Theorem \ref{th:Stri-half} for wave admissible pair or Schr\"odinger admissible pair $(p,q)$, we infer
\begin{align*}
     \|e^{it\sqrt{m^2-{\Delta}_g}}v_0\|_{L^{p}(I,L^q(\M))}\leq C\|v_0\|_{H^{\widetilde{\gamma}_{pq}}(\M)}.
\end{align*}
Thus for $m>0$,
\[
\|e^{it\D_m}u_0\|_{L^{p}(I,L^q(\M))}\leq C\|u_0\|_{H^{\widetilde{\gamma}_{pq}}(\M)}+t_0 C\sup_{s}\|B^i\partial_i u(s)\|_{H^{\widetilde{\gamma}_{pq}-1}(\M)}+t_0 C\sup_s\|\Omega_2 u(s)\|_{H^{\widetilde{\gamma}_{pq}-1}(\M)}.
\]
It remains to study the terms $\|B^i\partial_i u(s)\|_{H^{\widetilde{\gamma}_{pq}-1}(\M)}$ and $\|\Omega_2 u(s)\|_{H^{\widetilde{\gamma}_{pq}-1}(\M)}$. We first show that
\begin{align*}
    \|\Omega_1\partial_i u(s)\|_{H^{\widetilde{\gamma}_{pq}-1}(\M)}\leq \|u(s)\|_{H^{\widetilde{\gamma}_{pq}}(\M)},\quad \|\Omega_2 u(s)\|_{H^{\widetilde{\gamma}_{pq}-1}(\M)}\leq \|u(s)\|_{H^{\widetilde{\gamma}_{pq}}(\M)}.
\end{align*}
Using standard interpolation theory (see, e.g., \cite[Proposition 2.1 and Proposition 2.2, Chp.4]{taylor2011partial}), it suffices to show that
\begin{align}
    \|B^i\partial_i f\|_{H^{-1}(\M)}\leq \|f\|_{L^2(\M)},\quad \|B^i\partial_i f\|_{H^{n-1}(\M)}\leq \|f\|_{H^{n}(\M)}
\end{align}
where $n>\widetilde{\gamma}_{pq}$ is an integer. As $B_1\in C^\infty(\M)$, we infer that
\begin{align*}
    \|B^i\partial_i f\|_{H^{n-1}(\M)}\leq \|f\|_{H^{n}(\M)}.
\end{align*}
On the other hand, as $B^i\partial_i u=\partial_i(B^i u)-(\partial_i B^i)u$, we have
\begin{align*}
    \|B^i\partial_i f\|_{H^{-1}(\M)}&\leq \|(1-\Delta_g)^{-1/2}B^i\partial_i\|_{L^2(\M)\to L^2(\M)}\|f\|_{L^2(\M)}\\
    &\leq \|(1-\Delta_g)^{-1/2}[\partial_i B^i-(\partial_i B^i)]\|_{L^2(\M)\to L^2(\M)}\|f\|_{L^2(\M)}\leq C\|f\|_{L^2(\M)}.
\end{align*}
The above two estimates and the interpolation theory show that
\begin{align*}
    \|B^i\partial_i f\|_{H^{\widetilde{\gamma}_{pq}-1}(\M)}\leq \|f\|_{H^{\widetilde{\gamma}_{pq}}(\M)}.
\end{align*}
Analogously, as $\Omega_2\in C^\infty(\M)$, we also have that
\begin{align*}
     \|\Omega_2 f\|_{H^{\widetilde{\gamma}_{pq}-1}(\M)}\leq \|\Omega_2 f\|_{H^{\widetilde{\gamma}_{pq}}(\M)} \leq C\|f\|_{H^{\widetilde{\gamma}_{pq}}(\M)}.
\end{align*}
Thus,
\begin{align*}
    \|e^{it\D_m}u_0\|_{L^{p}(I,L^q(\M))}&\leq C\|u_0\|_{H^{\widetilde{\gamma}_{pq}}(\M)}+Ct_0\sup_{s}\|u(s)\|_{H^{\widetilde{\gamma}_{pq}}(\M)}\\
    &\leq \|u_0\|_{H^{\widetilde{\gamma}_{pq}}(\M)}+Ct_0\sup_{s}\||\D_m|^{\widetilde{\gamma}_{pq}}u(s)\|_{L^2(\M)}.
\end{align*}
The operator $|\D_m|$ is defined as follows
\begin{align}\label{eq:|D|}
    |\D_m|=\D_m[\mathbbm{1}_{[0,+\infty)}(\D_m)-\mathbbm{1}_{(-\infty,0)}(\D_m)]
\end{align}
and here we use the fact that for any $s\geq 0$,
\begin{align*}
    C_1\|(1-\Delta_g)^sf\|_{L^2(\M)}\leq \||\D_m|^{2s}f\|_{L^2(\M)}\leq C_2\|(1-\Delta_g)^sf\|_{L^2(\M)},
\end{align*}
which is obtained by using the interpolation theory again and the fact that there are constants $C_1',C_2'>0$ such that for any $n\in \N$,
\begin{align*}
    C_1'\|(1-\Delta_g)^nf\|_{L^2(\M)}\leq \||\D_m|^{2n}f\|_{L^2(\M)}\leq C_2'\|(1-\Delta_g)^nf\|_{L^2(\M)}.
\end{align*}
According to \eqref{eq:|D|}, $[|\D_m|,\D_m]=0$. As a result,
\begin{align*}
\|e^{it\D_m} u_0\|_{L^{p}(I,L^q(\M))}&\leq C\|u_0\|_{H^{\widetilde{\gamma}_{pq}}(\M)}+Ct_0\sup_{s}\||\D_m|^{\widetilde{\gamma}_{pq}}e^{it\D_m}u_0\|_{L^2(\M)}&\\
&\qquad\qquad\leq C\|u_0\|_{H^{\widetilde{\gamma}_{pq}}(\M)}+C\||\D_m|^{\widetilde{\gamma}_{pq}}u_0\|_{L^2(\M)}\leq C\|u_0\|_{H^{\widetilde{\gamma}_{pq}}(\M)}.
\end{align*}
This gives \eqref{eq:stri-masslessdirac} and \eqref{eq:stri-massivedirac} for $m>0$.

It remains to show the case $m=0$. By the Duhamel formula, for $\tilde{m}$ given by \eqref{eq:m}, we have
\begin{align*}
    e^{it\D_0}u_0=e^{it\D_{\tilde{m}}}u_0-\tilde{m}\int_0^t e^{i(t-s)\D_{\tilde{m}}}u(s)ds.
\end{align*}
Repeating the above proof for the case $m>0$, we infer
\begin{align*}
    \|e^{it\D_0}u_0\|_{L^{p}(I,L^q(\M))}&\leq C\|u_0\|_{H^{\widetilde{\gamma}_{pq}}(\M)}+Ct_0\sup_{s}\||\D_m|^{\widetilde{\gamma}_{pq}} e^{it\D_0}u_0\|_{L^2(\M)}\leq C\|u_0\|_{H^{\widetilde{\gamma}_{pq}}(\M)}.
\end{align*}
This concludes the proof.

\begin{remark}\label{rem:2} 
As it is seen, by making use of formulas \eqref{eq:Dsquare}-\eqref{repsol}, we have been able to deduce the Strichartz estimates for the Dirac flow from the ones for the half Klein-Gordon equation with a rather simple argument. In fact, it would have been much more complicated to tackle directly the study of the Dirac flow: if we studied the half-spinor-Klein-Gordon equation, then the proof of the existence of solutions for $a_r$ in equation \eqref{eq:arh} would have been significantly more involved. Indeed, in the spinorial case, the equations on $a_r$ turn out to be first-order ODE systems in the form $\partial_t a_r-\mathcal{A}a_r=\mathcal{F}_r$, with $a_r$, $\mathcal{A}$ and $\mathcal{F}_r$ matrices, rather than simple transport equations. On the one hand, the matrix $\mathcal{A}$ may not be self-adjoint, so the solution $a_r(t)$ may not have a bounded compact support; on the other hand, for a general $t$-dependent matrix $\mathcal{F}(t)$, we do not even know the formula for $\frac{d}{dt} e^{\mathcal{F}(t)}$, so we do not know the formula for $a_{0,h}$ and $a_{r,h}$. Finally, let us mention that it might be possible to rely on an explicit WKB approximation directly on the Dirac equation (see, e.g.,\cite{bolte} for its construction in the flat case), but this seems to require a significant amount of technical work, and therefore we preferred to rely on the strategy above.
\end{remark}

\subsection{The case of the sphere.}

As a final result, as done in \cite{burq2004strichartz} for the Schr\"odinger equation, we would like to test the sharpness of the Strichartz estimates proved in Theorem \ref{th:Stri-Dirac} in the case of the Riemannian sphere. In this case, the spectrum and the eigenfunctions of the Dirac operator are indeed explicit and well known (see, e.g.,\cite{camporesi},\cite{trautman}); we include here a short review of the topic, as indeed an explicit representation of these eigenfunctions will be needed for our scope.  Notice that in this section we will be considering the massless Dirac operator, that is the case $m=0$, and the subscript on the Dirac operator will be used to keep track of the dimension.  

\medskip

As seen in subsection \ref{introdir}, the definition of the Dirac matrices (and thus of the Dirac operator) is slightly different depending on whether the dimension $d$ of the sphere is even or odd: it is thus convenient to discuss the two cases separately. 
\medskip

$\bullet$ {\em Case $d$ even.}
In this case, the Dirac operator can be recursively defined as 
$$
\mathcal{D}_{\mathbb{S}^d}=\left(\partial_\theta +\frac{d-1}2\cot \theta\right)\gamma^d+\frac1{\sin\theta}\left(\begin{array}{cc}0 & \D_{\mathbb{S}^{d-1}} \\-\D_{\mathbb{S}^{d-1}} & 0\end{array}\right)
$$
where the matrix $\gamma^d$, as we have seen, is given in this case by $\gamma^d=\left(\begin{array}{cc}0 & I_{2^{\frac{d-2}2}} \\I_{2^{\frac{d-2}2}} & 0\end{array}\right)$. 

Now, let $\chi_{\ell m}^\pm$ be such that 
\begin{equation}\label{d-1eigen}
\D_{\mathbb{S}^{d-1}}\chi_{\ell m}^\pm=\pm (\ell+\tfrac12(d-1))\chi_{\ell m}^\pm,
\end{equation}
where $\ell=0,1,2\dots$ and $m$ run from $1$ to the degeneracy $d_\ell$ of the eigenfunction (notice that this parameter will play no role in our forthcoming argument). Then, we set
$$
\Psi=\left(\begin{array}{cc}\Psi^+  \\ \Psi^-\end{array}\right).
$$
One can separate variables as follows
:\begin{equation}\label{sepvar}
\Psi^{+,-}_{n\ell m}(\theta,\Omega)=\phi_{n\ell}(\theta)\chi_{\ell m}^-(\Omega),\quad 
\Psi^{+,+}_{n\ell m}(\theta,\Omega)=\psi_{n\ell}(\theta)\chi_{\ell m}^+(\Omega)
\end{equation}
(notice that the first apex $+$ in the above labels the first and second component of $\Psi$, while the second one distinguishes on the choice of the sign $\pm$ performed in \eqref{d-1eigen}). 
Clearly, an analogous decomposition holds for the component $\Psi^-$. Then, plugging \eqref{sepvar} into the squared equation $\D^2_{\mathbb{S}^d}\Psi=-\lambda_{n,d}^2 \Psi$ yields the following
$$
\left[\left(\frac{\partial}{\partial \theta}+\frac{d-1}2\cot \theta \right)^2-\frac{(\ell+\frac{d-1}2)^2}{\sin^2\theta}+(\ell+\tfrac{d-1}2)\frac{\cos\theta}{\sin^2\theta}\right]\phi_{n\ell}=-\lambda^2_{n,\ell}\phi_{n\ell}
$$
which has a unique (up to a constant) regular solution
\begin{equation}\label{phisol}
\phi_{n\ell}(\theta)=(\cos\tfrac\theta2)^{\ell+1}(\sin\tfrac\theta2)^\ell P_{n-\ell}^{\frac{d}2+\ell-1,\frac{d}2+\ell}(\cos\theta)
\end{equation}
with $n-\ell\geq0$ (this condition is required in order to have regular eigenfunctions) and with eigenvalue $\lambda_{n,d}^2=(n+\frac{d}2)^2$. Similarly, one gets
\begin{equation}\label{psisol}
\psi_{n\ell}(\theta)=(\cos\tfrac\theta2)^{\ell}(\sin\tfrac\theta2)^{\ell+1} P_{n-\ell}^{\frac{d}2+\ell,\frac{d}2+\ell-1}(\cos\theta).
\end{equation}
Then, the functions 
\begin{equation}\label{eveneigen1}
\Psi_{\pm n\ell m}^{1}(\theta,\Omega):=\frac{C_d(n\ell)}{\sqrt2}
\left(\begin{array}{cc}\phi_{n\ell}(\theta)\chi^-_{\ell m}(\Omega)  \\ \pm i\psi_{n\ell}(\theta)\chi^-_{\ell m}(\Omega)\end{array}\right)
\end{equation}
and
\begin{equation}\label{eveneigen2}
\Psi_{\pm n\ell m}^{2}(\theta,\Omega):=\frac{C_d(n\ell)}{\sqrt2}
\left(\begin{array}{cc}i\psi_{n\ell}(\theta)\chi^+_{\ell m}(\Omega)  \\ \pm \phi_{n\ell}(\theta)\chi^+_{\ell m}(\Omega)\end{array}\right)
\end{equation}
both satisfy equation
\begin{equation}\label{eigeneq}
\D_{\mathbb{S}^d}\Psi_{\pm n\ell m}^j(\theta,\Omega)=\pm (n+\tfrac{d}2)\Psi_{\pm n\ell m}^j(\theta,\Omega),\qquad j=1,2.
\end{equation}
The standard normalization condition
$$
\langle \Psi_{\pm n\ell m}^j,{\Psi^{j'}_{\pm n'\ell' m'}}\rangle_{L^2}=\delta_{nn'}\delta_{\ell \ell'}\delta_{mm'}\delta_{jj'}
$$
fixes the value of the constant $C_d(n\ell)$ to be
\begin{equation}\label{normcostant}
C_d(n\ell)=\frac{\sqrt{(n-\ell)!(n+\ell+1)!}}{2^{\frac{d}2-1}\Gamma(n+1)}.
\end{equation}

$\bullet$ {\em Case $d$ odd.}
In this case, we can write the Dirac equation as 
$$
\mathcal{D}_{\mathbb{S}^d}=\left(\partial_\theta +\frac{d-1}2\cot \theta\right)\gamma^d+\frac1{\sin\theta}
\D_{\mathbb{S}^{d-1}}
$$
with $\gamma^d=\left(\begin{array}{cc}I_{2^{(d-3)/2}} & 0\\0 & -I_{2^{(d-3)/2}}\end{array}\right)$. As done for the even case, taking $\chi_{\ell m}^{\pm}$ to be the eigenfunctions of the Dirac operator on the $(d-1)$-dimensional sphere, i.e.\ satisfying \eqref{d-1eigen}, the normalized eigenfunctions to the Dirac operator are given by
\begin{equation}\label{oddeigen}
\Psi_{\pm n\ell m}(\theta,\Omega)=\frac{C_d(n\ell)}{\sqrt2}\left(\phi_{n\ell}(\theta)\tilde{\chi}^-_{\ell m}(\Omega)\pm i\psi_{n\ell}(\theta)\tilde{\chi}^+_{\ell m}(\Omega)\right)
\end{equation}
with $\phi_{n\ell}$, $\psi_{n\ell}$ given by \eqref{phisol}-\eqref{psisol}, with $\tilde{\chi}^\pm$ defined as
\begin{equation*}
\tilde\chi_{\ell m} {^-}=\frac1{\sqrt2}(1+\Gamma^d)\chi_{\ell m}^-,\qquad 
\tilde\chi_{\ell m}{^+}=\Gamma^d \chi^-_{\ell m},
\end{equation*}
and where the normalization constant is given by \eqref{normcostant}. The functions given in \eqref{oddeigen} satisfy equation \eqref{eigeneq}.

\medskip

We are now in a position to show that our Strichartz estimates \eqref{eq:stri-masslessdirac} are sharp in dimension $d\geq4$. Let us  consider system \eqref{eq:Dirac} on ${\M}=\mathbb{S}^d$ with $m=0$, and let us take as initial condition $u_0$ an eigenfunction of the Dirac operator for a fixed eigenvalue $\lambda=\pm(n+\frac{d}2)$, with $n\in\mathbb{N}$. Then, the solution $u$ can be written as $u=e^{it\mathcal{D}_0}u_0=e^{it\lambda}u_0$. By taking any admissible Strichartz pair we can write, given that the time interval is bounded,
\begin{equation}\label{optimalest}
    \| e^{it\lambda }u_0\|_{L^p_I L^{q}(\mathbb{S}^d)}\sim\|u_0\|_{L^{q}(\mathbb{S}^d)}
\end{equation}

Now, we need the following spinorial adaptation of a classical result due to Sogge (see \cite{sogge}).

\begin{lemma}\label{lem-sogge}
Let $d\geq2$. For any $\lambda=\pm(n+\frac{d}2)$ with $n\in\mathbb{N}$ such that $|\lambda|$ is sufficiently large, there exists an eigenfunction $\Psi_\lambda$ of the Dirac equation on $\mathbb{S}^d$ such that the following estimate holds:
\begin{equation}\label{soggetype}
\| \Psi_\lambda\|_{L^q(\mathbb{S}^d)}\leq C|\lambda|^{s(q)}\| \Psi_\lambda\|_{L^2(\mathbb{S}^d)}
\end{equation}
with $s(q)=\frac{d-1}2-\frac{d}q$, provided $\frac{2(d+1)}{d-1}\leq q\leq \infty$.
\end{lemma}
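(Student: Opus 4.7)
The plan is to exhibit an explicit extremal eigenfunction of zonal type and reduce the required bound to classical asymptotic estimates for Jacobi polynomials. Specifically, for an eigenvalue $\lambda=\pm(n+d/2)$ with $n$ large, I would take $\ell=0$ in the family \eqref{eveneigen1}--\eqref{eveneigen2} (or \eqref{oddeigen} in odd dimension); this is the spinorial analogue of the zonal spherical harmonic that saturates Sogge's classical $L^q$ estimate for the Laplace--Beltrami operator. With $\ell=0$, the angular factor $\chi^{\pm}_{0m}$ on $\mathbb{S}^{d-1}$ is essentially a constant spinor, and the eigenfunction reduces to a combination of the two radial factors
\[
\phi_{n0}(\theta)=\cos(\theta/2)\,P_n^{(d/2-1,\,d/2)}(\cos\theta),\qquad \psi_{n0}(\theta)=\sin(\theta/2)\,P_n^{(d/2,\,d/2-1)}(\cos\theta),
\]
with normalization constant $C_d(n,0)\sim n^{1/2}$ from \eqref{normcostant}, so that $\|\Psi_\lambda\|_{L^2(\mathbb{S}^d)}=1$.

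Given this reduction, it suffices to estimate
\[
\|\Psi_\lambda\|_{L^q(\mathbb{S}^d)}^q \sim C_d(n,0)^q \int_0^{\pi}\bigl(|\phi_{n0}(\theta)|^q+|\psi_{n0}(\theta)|^q\bigr)\sin^{d-1}\theta\,d\theta.
\]
The key inputs are the two standard asymptotic regimes for $P_n^{(\alpha,\beta)}(\cos\theta)$: the Szeg\H{o} oscillatory asymptotic for $\theta\in (cn^{-1},\pi-cn^{-1})$, which produces a factor $n^{-1/2}$ together with weights $(\sin(\theta/2))^{-\alpha-1/2}(\cos(\theta/2))^{-\beta-1/2}$, and the Mehler--Heine Bessel-type asymptotic for $\theta\lesssim n^{-1}$, which gives $P_n^{(\alpha,\beta)}(\cos(z/n))\sim n^{\alpha}(z/2)^{-\alpha}J_\alpha(z)$ and, at the pole, $P_n^{(\alpha,\beta)}(1)\sim n^{\alpha}/\Gamma(\alpha+1)$.

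Plugging these expansions into the integral above and using $C_d(n,0)\sim n^{1/2}$, the contribution from a neighborhood of size $n^{-1}$ around each pole is of order
\[
n^{q/2}\cdot n^{q(d/2-1)}\cdot n^{-d}=n^{q(d-1)/2-d},
\]
which, upon taking the $q$-th root, is precisely $n^{s(q)}$ with $s(q)=(d-1)/2-d/q$. Meanwhile, the oscillatory part contributes something strictly smaller provided $q\geq q_c:=2(d+1)/(d-1)$; this threshold is exactly the Sogge exponent appearing in the statement of the lemma, and it arises as the transition between the two regimes exactly as in the classical scalar case.

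The main technical obstacle will be the careful bookkeeping across the transition region $\theta\sim n^{-1}$, where the Mehler--Heine and Szeg\H{o} asymptotics must be glued together uniformly in $n$, and the adaptation to the odd-dimensional formulas \eqref{oddeigen}, where the spinor structure of $\tilde\chi^{\pm}_{0m}$ is slightly different but does not affect the pointwise size. Since both of these points are entirely parallel to the scalar zonal analysis in Sogge's original work \cite{sogge}, no fundamentally new analytic ingredient is required beyond the explicit spinorial eigenfunction formulas recalled in the previous subsection.
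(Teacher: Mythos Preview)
Your proposal is correct and follows essentially the same route as the paper: choose the $\ell=0$ eigenfunction, observe that the angular factor is $n$-independent and $C_d(n,0)\sim n^{1/2}$, and reduce everything to asymptotics of Jacobi polynomials. The only difference is cosmetic: the paper invokes directly the integrated asymptotic $\int_0^1(1-x)^r|P_n^{\alpha,\beta}|^p\,dx\sim n^{\alpha p-2r-2}$ from \cite{szego} (valid precisely when $2r<\alpha p-2+p/2$, which is where the threshold $q\ge 2(d+1)/(d-1)$ enters), whereas you split the $\theta$-integral into the Mehler--Heine cap $\theta\lesssim n^{-1}$ and the oscillatory bulk and compute each piece separately---which is, of course, exactly how one proves the cited Szeg\H{o} formula in the first place.
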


\begin{proof}
 Let us deal with the case $d$ even; the case $d$ odd can be dealt with similarly. Let us take for any eigenvalue $\lambda=\pm(n+\frac{d}2)$ an eigenfunction $\Psi$ in the form \eqref{eveneigen1}-\eqref{eveneigen2} corresponding to the choice $\ell=0$, which is always admissible. Notice that the functions $\chi$ do not depend on $n$. Then, taking advantage of the classical asymptotic estimates on Jacobi polynomials 
\begin{equation*}
\int_0^1(1-x)^r |P_n^{\alpha,\beta}|^pdx\sim n^{\alpha p-2r-2}
\end{equation*}
provided $2r<\alpha p-2+p/2$ (see, e.g., \cite{szego} page 391), 
we easily get that
\begin{equation*}
\|\Psi_\lambda\|_{L^q(\mathbb{S}^d)}  \sim |\lambda|^{\frac{d-1}2-\frac{d}q}
\end{equation*}
for $|\lambda|\gg 1$ and $q\geq \frac{2(d+1)}{d-1}$.

\end{proof}

By making use of this Lemma we can thus estimate further \eqref{optimalest} as follows
\begin{equation*}
\|u_0\|_{L^{q}(\mathbb{S}^2)}\sim |\lambda|^{s(q)}\|u_0\|_{L^2(\mathbb{S}^2)}.
\end{equation*}
Then, taking $d\geq 4$ and $p=2$ in Strichartz estimates \eqref{eq:stri-masslessdirac} yields $q=\frac{2(d-1)}{d-3}$, so that $s(q)=\frac{d+1}{2(d-1)}$ which is exactly $\gamma^{\rm W}_{2,\frac{2(d-1)}{d-3}}$ and thus estimates \eqref{eq:stri-masslessdirac} are sharp provided $d\geq4$.

\begin{remark}
Lemma \ref{lem-sogge} is the analog of Theorem 4.2 in \cite{sogge}, where the author proves the same bound for homogeneous harmonic polynomials. Anyway, as the eigenfunctions of the Dirac operator are not ``pure'' spherical harmonics, we cannot simply evoke this result.
\end{remark}

\begin{remark}
Notice that the argument above relies on the ``endpoint'' $p=2$, and this is the reason why we are only able to prove the sharpness in the case $d\geq4$. Indeed, the same computations provide
\begin{itemize}
\item for $d=2$, by taking $p$ smallest possible, that is $p=4$ and thus $q=\infty$:
$$\gamma_{4,\infty}^{\rm W}=\frac34\qquad {\rm and}\quad s(\infty)=\frac12;
$$
\item for $d=3$, as the endpoint $(p,q)=(2,\infty)$ has to be excluded, by taking $p=2+\varepsilon$ with $\varepsilon>0$ small:
 $$
 \gamma_{2+\varepsilon,\frac{2(2+\varepsilon)}\varepsilon}^{\rm W}=\frac2{2+\varepsilon}\qquad {\rm and}\quad s\left(\frac{2(2+\varepsilon)}\varepsilon\right)=\frac{2}{2+\varepsilon}-\frac{\varepsilon}{2(2+\varepsilon)}
 $$
 which shows that the estimates are sharp in the limit $\varepsilon\rightarrow 0$.
\end{itemize}
\end{remark}

\medskip

\bibliographystyle{plain}
\bibliography{reference}

\end{document}